\documentclass[reqno]{amsart}
\usepackage{amssymb}
\usepackage{amscd,amsfonts}
\usepackage{tikz}
\usepackage{pgfplots}
\usetikzlibrary{automata,positioning,calc,trees}
\usetikzlibrary{intersections,pgfplots.fillbetween}
\usetikzlibrary{shapes.geometric, arrows}
\usepackage{graphicx,tikz}
\usepackage{pgf,tikz,pgfplots}
\usepackage{mathrsfs}
\usepackage{enumerate}
\usepackage[shortlabels]{enumitem}
\usepackage{mathrsfs}
\usepackage{amssymb,amsmath,amsthm,color}
\usepackage{caption,subcaption}
\usepackage{hyperref}
\usepackage{cleveref}
\usepackage[alphabetic,bibtex-style]{amsrefs}
\usepackage{url}
\usepackage{setspace}
\usepackage{float}
\usepackage{makecell}

\BibSpec{article}{%
	+{}  {\PrintAuthors}                {author}
	+{,} { \textit}                     {title}
	+{.} { }                            {part}
	+{:} { \textit}                     {subtitle}
	+{,} { \PrintContributions}         {contribution}
	+{.} { \PrintPartials}              {partial}
	+{,} { }                            {journal}
	+{}  { \textbf}                     {volume}
	+{}  { \PrintDatePV}                {date}
	+{,} { \issuetext}                  {number}
	+{,} { \eprintpages}                {pages}
	+{,} { }                            {status}
	+{,} { \url}                        {url}    
	+{,} { \PrintDOI}                   {doi}
	+{,} { available at \eprint}        {eprint}
	+{}  { \parenthesize}               {language}
	+{}  { \PrintTranslation}           {translation}
	+{;} { \PrintReprint}               {reprint}
	+{.} { }                            {note}
	+{.} {}                             {transition}
}
\BibSpec{book}{%
	+{}  {\PrintAuthors}                {author}
	+{,} { \textit}                     {title}
	+{.} { }                            {part}
	+{:} { \textit}                     {subtitle}
	+{,} { \PrintContributions}         {contribution}
	+{.} { \PrintPartials}              {partial}
	+{,} { }                            {series}
	+{}  { \textbf}                     {volume}
	+{}  { \PrintDatePV}                {date}
	+{,} { \issuetext}                  {number}
	+{,} { \eprintpages}                {pages}
	+{,} { }                            {status}
	+{,} { \url}                        {url}    
	+{,} { available at \eprint}        {eprint}
	+{}  { \parenthesize}               {language}
	+{}  { \PrintTranslation}           {translation}
	+{;} { \PrintReprint}               {reprint}
	+{.} { }                            {note}
	+{.} {}                             {transition}
	+{,} { \PrintDOI}                   {doi}
}
\newcommand{\R}{{\mathbb {R}}}

\newcommand{\N}{{\mathbb N}}
\newcommand{\Z}{{\mathbb Z}}
\newcommand{\C}{{\mathbb C}}

\newcommand{\T}{{\mathbb T}}

\newtheorem{theorem}{Theorem}[section]
\newtheorem{lemma}{Lemma}[section]
\newtheorem{proposition}{Proposition}[section]
\theoremstyle{definition}

\theoremstyle{remark}

\numberwithin{equation}{section}

\title[] {Variational estimate for the family of discrete averages associated to simplices}
 \keywords{}

\author[Samanta]{Siddhartha Samanta}
\address{Siddhartha Samanta}
\email{siddhartha21@iiserb.ac.in}

 \address{Department of Mathematics\\
 	Indian Institute of Science Education and Research Bhopal\\
 	Bhopal-462066, India.}

\subjclass[2020]{Primary 11P55, 11D72, Secondary 42B25}
\keywords{Discrete simplical average, lacunary sequence, long variation, variational estimates}

\begin{document}
\begin{abstract}
	We prove $\ell^2(\Z^n)-$estimate of long $r$-variational seminorm for the family of discrete averages associated to simplices.
\end{abstract}
\maketitle
\section{Introduction}\label{sec:intro} 

	\subsection{Discrete spherical averages}For a function $f:\mathbb{Z}^{n} \rightarrow \mathbb{C}$ in $\ell^{p}(\mathbb{Z}^{n})$, with $1\leq p\leq \infty$ and $\lambda \geq 1 $, the discrete spherical average is defined by
	\[M_{\lambda}f(x) = \frac{1}{r_n(\lambda)}\sum_{|y|^2 = \lambda}f(x-y),\]
	where $y$ is restricted to range over integer lattice points. Here $r_n(\lambda)$ is the number of integers $m\in\Z^n$ such that $|m|^2 = \lambda$. Notice that $r_n(\lambda)=N_n(\lambda)$, where $N_n(\lambda)$ is the standard function that counts the number of ways to represent $\lambda$ as a sum of $n$ squares. For $n\geq 5$, we have $c_{n}^{1}\lambda^{\frac{n}{2}-1}\leq r_n(\lambda)\leq c_{n}^{2}\lambda^{\frac{n}{2}-1}$ for some constants $0<c_n^1< c_n^2 <\infty$.
	In $2002$, Magyar-Stein-Wainger \cite{magyarannals} proved that the full discrete spherical maximal function 
	$M_{*}f(x) = \sup_{ \lambda \geq 1 }|M_{\lambda}f(x)|$
	is bounded on $\ell^p(\Z^n)$ whenever $p>\frac{n}{n-2}$ and $n\geq 5$. Later, Ionescu \cite{Ionescu} established the restricted weak-type estimate for the operator $M_{*}$ at the endpoint $\frac{n}{n-2},\;n\geq 5$.  Note that the ranges of $p$ and $n$ are best possible in the above results.  We call a sequence $\{\lambda_l\}_{l=1}^{\infty}=\mathbb{L}$ lacunary if for all $l\geq 1$ we have $\frac{\lambda_{l+1}}{\lambda_l}>c$, where $c$ is a real number strictly greater than $1$. The lacunary discrete spherical maximal function associated to any lacunary sequence $\{\lambda_l\}_{l=1}^{\infty}$ is defined as
	 \[M_{lac}f(x) = \sup\limits_{ l \geq 1 }|M_{\lambda_{l}}f(x)|,\]    
	We remark that, in contrast to the continuous case, the discrete lacunary spherical maximal operator $M_{lac}$ does not admit boundedness on $\ell^p(\Z^n)$ for values of $p$ below $\frac{n}{n-1}$. An explicit construction of the counterexample is given in \cite{Cook&Hughes1}*{section 7}. Kesler-Lacey-Mena \cite{Lacey3} proved that the operator $M_{lac}$ is bounded on $\ell^p(\Z^n)$ for the range $p>\frac{n-2}{n-3}$ and $n\geq 5$. Nevertheless, the question concerning the $\ell^p(\Z^n)$-boundedness of the operator $M_{lac}$ remains unresolved for values of $p$ between $\frac{n}{n-1}$ and $\frac{n-2}{n-3}$.

	\subsection{Averages associated to simplices in $\Z^n$} We call a finite set $R$ Ramsey if for every number of colors $r\in\N$, there is a dimension $n=n(R,r)$, for which every $r$-coloring of $\R^n$ contains a monochromatic congruent copy of $R$. The fundamental question in geometric Ramsey theory, originated in the work of Erd\H{o}s et al. \cite{ErdosRamseyTheoremI}, is to characterize the finite sets $R$ that are Ramsey. In particular, it is conjectured that a finite set $R$ is Ramsey if and only if $R$ is a subset of a sphere. 
	From now on, for any $1\leq k\leq n$, we call a configuration $S=\{0, s_1,\dots,s_k\}\subset\Z^n$ as a non-degenerate $k$-simplex if the vectors $s_1,\dots,s_k$ are linearly independent. Non-degenerate simplices constitute one of the canonical examples of Ramsey sets \cite{NondegenerateSimplicesAsRamsey}. For a non-degenerate $k$-simplex $S$ and $\lambda\in\sqrt{\N}$, we say that a simplex $\bar{S}=\{0, m_1,\dots, m_k\}$ is isometric to $S$, denoted $\bar{S}\simeq\lambda S$, if $|s_i-s_j|=\lambda |m_i-m_j|$ for all $0\leq i,j\leq k.$ Now, we define \[\mathcal{S}_{\lambda S}:=\{(m_1,\dots, m_k)\in\Z^{nk}: \bar{S}=\{0, m_1,\dots, m_k\}\simeq\lambda S\}.\]
	Let $|\mathcal{S}_{\lambda S}|$ denote the number of isometric copies of $\lambda S$. It is well-known from \cites{SimplicesMagyar,DukeMagyar, SimplicesCookmultilinear} that, for $n\geq 2k+3$, the quantity $|\mathcal{S}_{\lambda S}|$ satisfies the bounds $\mathfrak{c}_{S}^1\lambda^{nk-k(k+1)}\leq|\mathcal{S}_{\lambda S}|\leq\mathfrak{c}_{S}^2\lambda^{nk-k(k+1)}$ for some constants $0<\mathfrak{c}_{S}^1<\mathfrak{c}_{S}^2<\infty$; i.e., $|\mathcal{S}_{\lambda S}|\approx\lambda^{nk-k(k+1)}$. Next, observe that for $k=1$ and $s_1=(1,0,\dots,0)$, the set $\mathcal{S}_{\lambda S}$ is precisely the set of integer lattices over the surface of the Euclidean sphere of radius $\lambda$. Since discrete spherical averages of functions have been extensively studied, it is interesting to investigate the behaviour of the functions when sampled along isometric copies of a non-degenerate $k$-simplex. Thus, it is
	natural to ask about the $\ell^p$-bounds for the maximal function associated to the discrete simplical averages defined as follows.
	 For a function $f:\Z^{nk}\rightarrow \R$, we define the discrete linear simplical average as 
	\[A_{\lambda S}f(x)=\frac{1}{|\mathcal{S}_{\lambda S}|}\sum_{y\in\mathcal{S}_{\lambda S}}f(x-y).\]
	Lyall, Magyar, Newman, and Woolfitt \cite{SimplicesMagyar} prove that the maximal function associated to the discrete simplical averages
	$A_{*}f(x):=\sup_{\lambda\geq 1}|A_{\lambda S}f(x)|$
	is bounded on $\ell^2(\Z^{nk})$ for all $k\geq 1,\; n\geq 2k+3$, and a non-degenerate $k$-simplex $S\subseteq\Z^n$. However, the question regarding the $\ell^p$-boundedness of the maximal function $A_{*}$ for values of $p$ other than $2$ remains open. 

	\subsection{The $r$-variation for the family of discrete averages} Let $U\subset\N$, and $f:\Z^n\rightarrow\C$ be a function in $\ell^p(\Z^n)$ with $1\leq p\leq\infty$. Let $r\in [1,\infty]$ and $\mathcal{T}=\{T_{\lambda}:\lambda\in U\}$ be a family of operators acting on the function $f$. The $r$-variation corresponding to the family $\mathcal{T}f=\{T_{\lambda}f:\lambda\in U\}$ is defined by
	\begin{align}
		V_{r}(\mathcal{T}f)(x) := \begin{cases} 							\sup\limits_{L\in\N}										\sup\limits_{\lambda_1<\dots<\lambda_L}\left(\sum\limits_{l=1}^{L-1}|T_{\lambda_{l+1}}f(x)-T_{\lambda_{l}}f(x)|^r\right)^{\frac{1}{r}}, & 0<r<\infty, \\
	\sup\limits_{\lambda_0<\lambda_1}\left|T_{\lambda_{1}}f(x)-				T_{\lambda_{0}}f(x)\right|, & r=\infty,\end{cases}
	\label{definition of variation}
	\end{align}
   
    where the supremum is taken over all finite increasing sequences $\{\lambda_1,\dots,\lambda_L\}$ lying in $U$. Since the beginning of the study of $r$-variations by L\'{e}pingle \cite{variationformartingales}, intensive research has been devoted to the study of $r$-variation across various classes of discrete and continuous averages. Contributions by numerous mathematicians include, for instance, \cites{DimensionfreeestimatesdiscreteHL,Bootstrap,variationalestimatesradon,Variationalongprimesandpolynomials, variationalinequalitycontinuous} among others; see also the references therein for related developments. In particular, Jones, Seeger, and Wright \cite{variationalinequalitycontinuous} studied $L^p-$estimates of the $r$-variation for continuous spherical averages. In the article \cite{variationBirchMagyar}, we initiate the study of variational estimates for the class of discrete averages over certain hyper-surfaces, which includes the sphere. Note that, we define $V_{r}(\mathcal{T}f)$ as the long $r$-variation corresponding to the family $\mathcal{T}f$, obtained by restricting the supremum in \eqref{definition of variation} to a lacunary sequence. As working with any lacunary sequence is equivalent to working with the sequence $\{2^l\}_{l=1}^\infty$, now onwards we will stick to the lacunary sequence $\mathbb{L}=\{2^l\}_{l=1}^\infty$.
	In this article, we study $\ell^{2}(\mathbb{Z}^{n})$-boundedness of $r$-variation of the family of discrete simplical averages $\mathcal{A}_\mathbb{L}^{S}=\{A_{\lambda S}:\;\lambda\in\mathbb{L}\}$. 
	In this article, we study $\ell^{2}(\mathbb{Z}^{n})$-boundedness of the long $r$-variations of the family of discrete averages associated to simplices. Let $\mathcal{A}_\mathbb{L}^{S}=\{A_{\lambda S}:\;\lambda\in\mathbb{L}\}$ be the family of discrete simplical averages over lacunary sequence $\mathbb{L}$. Then, the long $r-$variations associated to the family $\mathcal{A}_\mathbb{L}^{S}$ is defined as follows:
	\[V_r(\mathcal{A}_\mathbb{L}^S f)(x):=V_r(\{A_{\lambda S} f,\;\lambda\in\mathbb{L}\})(x).\]
    We note that
	\[|A_{lac}^S f(x)|\leq |A_{\lambda_0 S}f(x)|+2V_r(\mathcal{A}_\mathbb{L}^S f)(x),\]    
  	holds for any $\lambda_0\in\N$ and $1\leq r<\infty$. Thus, the $\ell^p-$ bounds for the long $r-$ variations imply the corresponding bounds for the lacunary maximal operator $A_{lac}^S$. Also observe that, the $\ell^p-$ boundedness of $V_r(\mathcal{A}_\mathbb{L}^S f)$ immediately implies that the term in \eqref{definition of variation} corresponding to the family $\mathcal{A}_\mathbb{L}^S f$ is finite for almost every $x\in\Z^{nk}$, which in turn implies almost everywhere convergence of the sequence $\{A_{\lambda S}f\}$ as $\lambda\rightarrow\infty$. Our main result of this paper is the following.
	\begin{theorem}\label{thm:varSimplex}
		Let $S=\{s_0=0,s_1,s_2,\dots,s_k\}\subseteq\Z^n$ be a non-degenerate $k$-simplex and $\mathbb{L}=\{2^l\}_{l=1}^\infty$, then for $r>2$, $k\geq 1$,  and $n\geq 2k+3$ we have 
		\begin{equation}\label{mainineq:Vr}
			\|V_r(\mathcal{A}_{\mathbb{L}}^S f)\|_{\ell^2(\Z^{kn})}\lesssim \|f\|_{\ell^2(\Z^{kn})}.
		\end{equation}
	\end{theorem}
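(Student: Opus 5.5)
The plan is to follow the approach used for variational bounds of discrete spherical averages: decompose the Fourier multiplier of $A_{\lambda S}$ into a main term and an error term, and estimate each on the Fourier side. First, write $A_{\lambda S}f=\mathcal{F}^{-1}(\widehat{\sigma_\lambda}\hat f)$ on $\T^{nk}$. Then invoke the approximation formula of Lyall--Magyar--Newman--Woolfitt \cite{SimplicesMagyar}, obtained via the circle method for systems of quadratic forms; this is valid for $n\ge 2k+3$. It gives
\[
\widehat{\sigma_\lambda}(\xi)=\sum_{q\ge1}\sum_{a}e_q(\cdot)\,\mathcal{G}(a,q,\ell)\,\psi(q\xi-\ell)\,\widetilde{d\sigma_{\lambda}}(\xi-\ell/q)+\widehat{E_\lambda}(\xi).
\]
Here $\mathcal{G}$ is a normalized multidimensional Gauss sum with decay $q^{-n/2+k+1+\epsilon}$ (after summing over the $a$'s, roughly $q^{k(k+1)/2+\dots}$, which gives a summable total). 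The factor $\widetilde{d\sigma_\lambda}$ is the Fourier transform of the normalized measure on the continuous variety of isometric copies of $\lambda S$. The error operator satisfies $\|E_{\lambda}\|_{\ell^2\to\ell^2}\lesssim \lambda^{-\delta}$ for some $\delta>0$. For the lacunary family $\lambda=2^l$, the error contributes to $V_r$ at most $\sum_l |E_{2^l}f|$, which has $\ell^2$ norm $\lesssim\sum_l 2^{-\delta l}\|f\|_2$. So the error term is harmless.

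For the main term, I would group by denominators into dyadic blocks $q\sim 2^s$ and write $M^{s}_\lambda$ for the corresponding piece. The key estimate is $\|V_r(\{M^s_{2^l}f\}_l)\|_{\ell^2}\lesssim s\,2^{-\delta' s}\|f\|_2$, which can then be summed in $s$.

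For a fixed $s$ I split the scales into $2^l\le 2^{Cs}$ and $2^l>2^{Cs}$.
\begin{itemize}
\item \emph{Small scales.} There are only $O(s)$ such $l$, so I bound $V_r$ by $2\sum_l|M^s_{2^l}f|$. Each piece has norm $\lesssim 2^{-\delta' s}$ by the Gauss sum decay and the disjointness of the cutoffs $\psi(q\xi-\ell)$ for distinct $\ell/q$ with $q\sim2^s$.
\item \emph{Large scales.} In this range the continuous multiplier $\widetilde{d\sigma_{2^l}}(\xi-\ell/q)$ varies on a scale much finer than the cutoff $\psi(2^{2s}(\xi-\ell/q))$. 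I would then apply the Magyar--Stein--Wainger sampling/transference principle (in the variational form used in \cite{variationBirchMagyar}). This reduces the problem to two ingredients: a factor $\sup_{\xi}|\mathcal{G}|$-type bound of size $2^{-\delta's}$, and the $L^2(\R^{nk})$ bound for the long $r$-variation of the continuous simplicial averages $f\mapsto f*d\sigma_{2^l}$ (in fact a vector-valued version over the $\ell$'s).
\end{itemize}

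The continuous long variation for $r>2$ follows from a standard square-function argument. One compares with a smooth Schwartz average $\phi_{2^l}$, whose long variation is bounded for $r>2$ by L\'epingle/Jones--Seeger--Wright. It then remains to show that $\sum_l|(d\sigma_{2^l}-\phi_{2^l})*f|^2$ is bounded in $L^2$. By Plancherel this reduces to
\[
\sup_\xi\sum_l|\widehat{d\sigma}(2^l\xi)-\hat\phi(2^l\xi)|^2<\infty,
\]
which holds because $\widehat{d\sigma}$ vanishes to first order at $0$ relative to $\hat\phi$ and decays as $|\xi|^{-\gamma}$ for some $\gamma>0$. This decay comes from the curvature of the variety $\{(y_1,\dots,y_k):y_i\cdot y_j=\lambda^2 t_{ij}\}$, a product-like homogeneous variety whose Fourier transform decays by stationary phase when $n\ge 2k+3$.

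I expect the main obstacle to be the transference step for large scales. There one has to justify that the vector-valued variational bound transfers from $\R^{nk}$ to $\Z^{nk}$ uniformly over the $\approx 2^{s(k(k+1)/2+1)}$ rationals $\ell/q$ of dyadic size $2^s$. The loss from the number of rationals must be beaten by the Gauss sum decay, and that decay is exactly where $n\ge2k+3$ is used. I would first try to handle this with the same $\ell^2$ orthogonality argument as in the maximal-function proof of \cite{SimplicesMagyar}, replacing $\sup$ by $V_r$, which is a sublinear seminorm and is compatible with their sampling lemma.
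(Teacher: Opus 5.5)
The gap is in your large-scale step. The factor you left as $e_q(\cdot)$ is, up to normalization, $e(-\lambda^2\langle a,T\rangle/q)$, where $T=(s_i\cdot s_j)_{i,j}$ is the Gram matrix of $S$. So the coefficient in front of the continuous piece at $\ell/q$ is $c_q(\lambda,\ell)=\sum_a e(-\lambda^2\langle a,T\rangle/q)\,\mathcal{G}(a,q,\ell)$, and it depends on $\lambda^2 \bmod q$. Along $\lambda=2^l$ the residue $4^l \bmod q$ is periodic but not eventually constant as soon as $q$ has a prime factor $p\notin\{2,3\}$, so $c_q(2^l,\ell)$ keeps oscillating in $l$. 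The sampling principle only transfers bounds for a $\lambda$-independent coefficient times a continuous multiplier. For $\sup_\lambda$, Magyar--Stein--Wainger remove the phase by the triangle inequality in $a$, but $V_r$ does not allow this: already $V_r(\{c_lF\}_l)=|F|\,V_r(\{c_l\}_l)=\infty$ for a fixed number $F$ and a non-constant periodic $(c_l)$. So the reduction to ``Gauss sum decay times continuous variation'' is invalid, and your bound $\|V_r(\{M^s_{2^l}f\}_l)\|_{\ell^2}\lesssim s2^{-\delta's}\|f\|_{\ell^2}$ already fails for the block containing $q=5$.

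This cannot be repaired, because the inequality is false as stated. Take $f(x)=e(\xi\cdot x)\phi(x/N)$ with $\phi\equiv1$ on the unit ball. Whenever $|x|\le N/2$ and $2^l|s|\le N/4$, one has exactly $A_{2^lS}f(x)=e(\xi\cdot x)\,\mathscr{F}_{\Z^{kn}}(w_{2^lS})(\xi)$. Letting $N\to\infty$, the theorem therefore forces $\sup_\xi V_r(\{\mathscr{F}_{\Z^{kn}}(w_{2^lS})(\xi)\}_l)<\infty$.

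Now take $k=1$, $n=6$, $S=\{0,e_1\}$ and $\xi=e_1/5$. Then $\mathscr{F}_{\Z^6}(w_{2^lS})(e_1/5)=r_6(4^l)^{-1}\sum_{|m|^2=4^l}e(-m_1/5)$. By the circle method ($n\ge5$, $5\nmid 4^l$), the points of $\{|m|^2=N\}$ equidistribute over the classes $b\in\Z_5^6$ with $|b|^2\equiv N$. Hence this quantity equals $\kappa(4^l\bmod 5)+O(2^{-\delta l})$, where $\kappa(N)=\sum_{b\in\Z_5^6,\,|b|^2\equiv N}e(-b_1/5)\big/\#\{b\in\Z_5^6:|b|^2\equiv N\}$. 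A Gauss sum computation gives $\kappa(1)=(2+2\cos\frac{2\pi}{5})/124\neq(2+2\cos\frac{4\pi}{5})/124=\kappa(4)$. Since $4^l\equiv(-1)^l \pmod 5$, the sequence oscillates with period $2$, and $\|V_r(\mathcal{A}^S_{\mathbb{L}}f)\|_{\ell^2}/\|f\|_{\ell^2}\gtrsim(\log N)^{1/r}\to\infty$.

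The paper takes a different route. It reduces to the jump inequality and splits $f=f*\Psi^s_{l,0}+\sum_j f*\Delta\Psi^s_{l,j}+(f-f*\Psi^s_{l,J_l-2})$. The last two pieces are handled by square functions and \Cref{prop:local l2 estimate}, and the first by comparison with dyadic martingales. That route breaks at the same place. The proof of \Cref{prop:Simplexl2bound square fn b} computes the $l$-difference $\Psi^s_{l+1,j}-\Psi^s_{l,j}$, whereas $\Delta\Psi^s_{l,j}=\Psi^s_{l,j+1}-\Psi^s_{l,j}$ is a $j$-difference. The multiplier of the latter equals $1$ at every $b/t_{j+1}\notin t_j^{-1}\Z^{kn}$ for all large $l$, so $\sum_l|\mathscr{F}_{\Z^{kn}}(\Delta\Psi^s_{l,j})|^2=\infty$ there, for instance at $e_1/5$ with $j=2$.
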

	 The proof of \Cref{thm:varSimplex} relies on the jump inequalities corresponding to the family $\mathcal{A}_\mathbb{L}^{S}$. Thus, we now define the $\lambda$-jump corresponding to the family $\mathcal{A}_\mathbb{L}^{S}$, as the supremum of all integers $M\in\N$ for which there is an increasing sequence $0<u_1<v_1\leq u_2<v_2\leq\cdots\leq u_M<v_M$ with $u_i,v_i\in\mathbb{L}$, such that
    \[|A_{u_l S}f(x)-A_{v_l S}f(x)|>\lambda\] 
   holds for each $l=1,\dots,M$. We denote the $\lambda$-jump corresponding to the family $\mathcal{A}_\mathbb{L}^{S}$ by $\mathcal{J}_{\lambda}(\mathcal{A}_\mathbb{L}^{S})$. We suggest the article \cite{variationalinequalitycontinuous} to the interested readers for further details involving $\lambda-$jumps. The following lemma is the discrete analogue of \cite{variationalinequalitycontinuous}*{Lemma 2.1}, which plays a crucial role in the proof of \Cref{thm:varSimplex}. Since its proof is identical to that of \cite{variationalinequalitycontinuous}*{Lemma 2.1}, we omit the details.

    \begin{lemma}\cite{variationalinequalitycontinuous}*{Lemma 2.1}\label	{thm:jump implies variation b}
    	Let $\mathcal{T}$ be a family of linear operators from $\ell^2(\Z^{kn})$ to the set of measurable functions on $\Z^{kn}$. Suppose
    	\begin{align*}
        	\sup_{\lambda>0}\|\lambda(\mathcal{J}_{\lambda}(\mathcal{T}f))^{1/2}\|_{\ell^2}&\lesssim\|f\|_{\ell^2}
    	\end{align*}
    	holds for all $f\in\ell^2(\Z^{kn})$. Then, for all $f\in\ell^2(\Z^{kn})$ and $r>2$ we have,
    	\begin{align*}
        	\|V_{r}(\mathcal{T}f)\|_{\ell^2}&\lesssim\|f\|_{\ell^2}.
    	\end{align*}
    \end{lemma}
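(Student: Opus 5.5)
The plan is to follow the argument of \cite{variationalinequalitycontinuous}*{Lemma 2.1}: convert the jump bound into a bound for the $r$-variation at each fixed point $x$, by splitting jumps dyadically according to their size, and then sum over the scales.

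Fix $x$ and a finite increasing sequence $\lambda_1<\dots<\lambda_L$ in the index set, and write $a_l=|T_{\lambda_{l+1}}f(x)-T_{\lambda_l}f(x)|$. Decompose
\[
\sum_l a_l^r=\sum_{j\in\Z}\sum_{l:\,2^{j}<a_l\le 2^{j+1}}a_l^r .
\]
For a fixed $j$, the indices with $a_l>2^j$ give disjoint consecutive pairs $u=\lambda_l<v=\lambda_{l+1}$, and these satisfy the interlacing condition $u_1<v_1\le u_2<\cdots$. So their number is at most $\mathcal{J}_{2^j}(\mathcal{T}f)(x)$, and therefore
\[
V_r(\mathcal{T}f)(x)^r\le 2^r\sum_{j\in\Z}2^{jr}\,\mathcal{J}_{2^j}(\mathcal{T}f)(x).
\]
A pointwise bound of this kind cannot simply be integrated, since summing the hypothesis over $j$ diverges logarithmically. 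The remedy is to use the trivial cap. Note that $\mathcal{J}_{\lambda}(\mathcal{T}f)(x)=0$ once $\lambda\ge 2\sup_\lambda|T_\lambda f(x)|$. Also, the jump counting function is dominated by the variation itself: $\mathcal{J}_\lambda\le \lambda^{-\rho}V_\rho^\rho$ for any $\rho$.

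The key step is a layer-cake / interpolation argument. Let $N_\lambda=\lambda\,\mathcal{J}_\lambda^{1/2}$, so that the hypothesis says $\|N_\lambda\|_{\ell^2}\lesssim\|f\|_{\ell^2}$ uniformly in $\lambda$. For $r>2$ we have
\[
\sum_j 2^{jr}\mathcal{J}_{2^j}=\sum_j 2^{j(r-2)}N_{2^j}^2 .
\]
To control this I would pass to a weak-type estimate and then use Marcinkiewicz interpolation in $r$, exactly as in \cite{variationalinequalitycontinuous}:

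1. Show that the uniform jump bound implies the restricted weak-type bound $\|V_\rho(\mathcal{T}f)\|_{\ell^{2,\infty}}\lesssim\|f\|_{\ell^2}$ for every $\rho>2$. This uses a level-set argument. On the set where $V_\rho>\alpha$, split the sum over $j$ at the level $2^j\approx\alpha$. The large-$j$ part is handled by the jump counting with Chebyshev. The small-$j$ part uses the geometric decay $2^{j(\rho-2)}$ together with $\mathcal{J}_{2^j}\lesssim 2^{-2j}N_{2^j}^2$, applied to the measure of the set.

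2. Interpolate between two values $2<\rho_0<r<\rho_1$ (linearity of $\mathcal{T}$ and sublinearity of $V_\rho$ allow this) to upgrade to the strong $\ell^2$ bound. Alternatively, use the log-convexity $V_r\le V_{\rho_0}^{\theta}V_{\rho_1}^{1-\theta}$.

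The main obstacle is step 1 together with the upgrade from weak to strong type at the same exponent $2$. Interpolation in the variation index alone does not change the Lebesgue exponent. So I would instead follow the original device from \cite{variationalinequalitycontinuous}: apply the hypothesis simultaneously at all levels, split $f$ into a sum of functions $f_i$ with $\|f_i\|_{\ell^2}\approx 2^{-i\epsilon}\|f\|_{\ell^2}$ adapted to the dyadic scales of $V_r$, and sum the resulting geometric series, which converges precisely because $r>2$. Since the argument involves only pointwise counting on $\Z^{kn}$ and the counting measure, it transfers verbatim from the continuous setting, which is why the paper states that the proof is identical.
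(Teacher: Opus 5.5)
Your Step 1 is correct, and it is the real content available at a single exponent. Split $V_\rho^\rho\lesssim\sum_j 2^{j\rho}\mathcal{J}_{2^j}$ at $2^j\approx\epsilon\alpha$. The large-$j$ part is supported in $\{\mathcal{J}_{\epsilon\alpha}\ge 1\}$, which has size at most $(\epsilon\alpha)^{-2}\|f\|_{\ell^2}^2$. Chebyshev applied to the small-$j$ part gives $\alpha^{-\rho}\sum_{2^j\le\epsilon\alpha}2^{j(\rho-2)}\|f\|_{\ell^2}^2\lesssim\alpha^{-2}\|f\|_{\ell^2}^2$, because $\rho>2$. This yields $\|V_\rho(\mathcal{T}f)\|_{\ell^{2,\infty}}\lesssim\|f\|_{\ell^2}$, and nothing more.

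The gap is the final upgrade to the strong $\ell^2$ norm at the same exponent. You rightly note that interpolating in $\rho$ cannot do this. Your substitute, splitting $f$ into pieces $f_i$ with $\|f_i\|_{\ell^2}\approx 2^{-i\epsilon}\|f\|_{\ell^2}$ ``adapted to the dyadic scales of $V_r$'', is not an argument. A bound at the single exponent $2$ gives no mechanism linking such pieces to the level sets of $V_r(\mathcal{T}f)$. In fact no argument can close this step: with one exponent the hypothesis is genuinely only of weak type, and the conclusion as stated is false.

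Here is a counterexample. On $\Z^{kn}$ let $Tf(x)=f(0)\,g(x)$ with $g(x)=(1+|x|)^{-kn/2}$, and take the family $T_2=0$, $T_{2^l}=T$ for $l\ge 2$. Only a pair starting at $u=2$ can produce a nonzero difference, and at most one such pair fits in an interlaced chain. Hence $\mathcal{J}_\lambda(\mathcal{T}f)=1_{\{|Tf|>\lambda\}}$ and $V_r(\mathcal{T}f)=|Tf|$. Since $\#\{x: g(x)>\lambda\}\lesssim\lambda^{-2}$ while $\sum_x(1+|x|)^{-kn}=\infty$, we get
\[
\sup_{\lambda>0}\big\|\lambda\,\mathcal{J}_\lambda(\mathcal{T}f)^{1/2}\big\|_{\ell^2}=\|Tf\|_{\ell^{2,\infty}}\le |f(0)|\,\|g\|_{\ell^{2,\infty}}\lesssim\|f\|_{\ell^2},
\qquad
\|V_r(\mathcal{T}\delta_0)\|_{\ell^2}=\|g\|_{\ell^2}=\infty .
\]
So the lemma with a jump bound at the single exponent $2$ is false. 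The paper's deferral to \cite{variationalinequalitycontinuous} does not establish it either.

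What the interpolation argument there actually uses is a jump bound at two exponents $p_0<p_1$. Running your Step 1 at $p_0$ and at $p_1$ gives weak-type bounds for $V_r(\mathcal{T}\,\cdot)$ at both exponents. Marcinkiewicz interpolation then gives strong bounds for $p_0<p<p_1$: at each level $\alpha$, cut $f$ at height $\alpha$ and use the linearity of $\mathcal{T}$. To conclude at $\ell^2$ you therefore need jump inequalities for some $p_0<2<p_1$, which the statement does not supply.
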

	It thus suffices to establish the next result, from which \Cref{thm:varSimplex} will follow.
	
    \begin{theorem}\label{thm:jumpSimplex}
    	Let $f\in\ell^2(\Z^{kn})$, then we have
        \begin{align*}
            \sup_{\lambda>0}\|\lambda(\mathcal{J}_{\lambda}(\mathcal{A}_\mathbb{L}^{S}f))^{1/2}\|_{\ell^2}&\lesssim\|f\|_{\ell^2}.
        \end{align*}
    \end{theorem}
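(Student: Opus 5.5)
We will prove the jump inequality by an approximation formula for the multiplier of $A_{\lambda S}$, followed by a split into a main term and an error term. By Magyar's work on simplices (the Hardy--Littlewood circle method estimates of \cite{SimplicesMagyar}), the Fourier multiplier $\widehat{A_{\lambda S}}(\xi)$, $\xi\in\T^{nk}$, can be written as
\begin{equation*}
\widehat{A_{\lambda S}}(\xi)=\sum_{q\ge1}\sum_{a\in \mathcal{R}_q} G(a/q)\,\Psi_q(\xi-a/q)\,\widetilde{d\sigma_{\lambda S}}(\xi-a/q)+E_{\lambda}(\xi),
\end{equation*}
where $a$ runs over reduced residues (symmetric integral $k\times k$ matrices mod $q$ in the simplex setting), $G(a/q)$ are normalized Gauss-type sums with $|G(a/q)|\lesssim q^{-n/2+\epsilon}$ (summable over $a$ and $q$ with a gain when $n\ge 2k+3$), $\Psi_q$ is a smooth cutoff adapted to $[-1/(Cq),1/(Cq)]^{nk}$, and $\widetilde{d\sigma_{\lambda S}}$ is the Fourier transform of the natural measure on the continuous configuration space of isometric copies of $\lambda S$ (a submanifold of $\R^{nk}$). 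The error term satisfies $\|E_{\lambda}\|_{L^\infty}\lesssim \lambda^{-\delta}$ for some $\delta>0$ when $n\ge 2k+3$. We take this decomposition as given, or re-derive it by sketching the circle method as in \cite{SimplicesMagyar}. The $\lambda$ in the jump is a height, not a dilation parameter; we write $t$ for the dilation parameter to keep the two apart.

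\textbf{Error term.} Since $\mathcal{J}_\lambda$ is controlled by the $2$-variation, we have $\lambda^2\mathcal{J}_\lambda(\mathcal{T}f)\lesssim V_2(\mathcal{T}f)^2\le 4\sum_{t\in\mathbb{L}}|T_tf|^2$. Hence, by Plancherel,
\[
\sup_\lambda\|\lambda\mathcal{J}_\lambda^{1/2}\|_{\ell^2}\lesssim\Big(\sum_{l}\|E_{2^l}\|_\infty^2\Big)^{1/2}\|f\|_{\ell^2}\lesssim\|f\|_{\ell^2}.
\]
Lacunarity makes $\sum 2^{-2l\delta}$ finite.

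\textbf{Main term.} We use the quasi-subadditivity of jumps, $\mathcal{J}_\lambda(\sum_j \mathcal T_j)^{1/2}$, in the form of the triangle inequality for the jump quasi-norm (as in Mirek--Stein--Zorin-Kranich). We split the sum into dyadic blocks $q\sim 2^s$ and write the block as $\sum_{a/q}G(a/q)\,\Psi_q(\xi-a/q)\,\widetilde{d\sigma}_t(\xi-a/q)$. It then suffices to show, for each $s$, a jump bound of size $\lesssim 2^{-\eta s}\|f\|_{\ell^2}$ with $\eta>0$, and to sum over $s$.

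For fixed $s$ we further split the scales into $t\le 2^{Cs}$ and $t>2^{Cs}$. For small scales there are only $O(s)$ lacunary values of $t$. Bounding the jump by the square function over these scales, and using the Gauss-sum decay together with disjointness of supports of $\Psi_q(\cdot-a/q)$ (after shrinking the cutoff), we get a bound $\lesssim s^{1/2}\,2^{s(-n/2+k+1+\epsilon)}$ times $\|f\|$. This is summable since $n\ge 2k+3$ (each $q\sim 2^s$ contributes at most $q^{k(k+1)/2}$ numerators).

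For large scales $t>2^{Cs}$ we replace $\Psi_q(\xi-a/q)\widetilde{d\sigma}_t$ by $\Psi_{2^s}(\xi-a/q)\widetilde{d\sigma}_t$; the difference is controlled by the decay $|\widetilde{d\sigma}_t(\eta)|\lesssim (t|\eta|)^{-\gamma}$ and is handled again by a square function. We then invoke the sampling principle/Ionescu--Wainger-type transference of Magyar--Stein--Wainger: the operator with multiplier $\sum_{a}G(a/q)\Psi_{2^s}(\xi-a/q)m_t(\xi-a/q)$ inherits jump bounds from the continuous family $\{m_t=\widetilde{d\sigma}_t\}$ on $L^2(\R^{nk})$, with constant $\lesssim \sup_a|G|\cdot(\#\text{numerators})^{\epsilon}$, or, more simply for $\ell^2$, through orthogonality of the disjoint frequency pieces. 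The continuous jump inequality for the lacunary family $\{d\sigma_t\}_{t\in\mathbb{L}}$ on $L^2$ follows from the Jones--Seeger--Wright method. We compare with a smooth dyadic average $\Phi_t$, whose jump bound is the classical L\'epingle/martingale-type inequality. The difference is controlled by the square function $\sum_t|\widetilde{d\sigma}_t-\widehat{\Phi}_t|^2\lesssim1$, using $|\widetilde{d\sigma}(\eta)-1|\lesssim|\eta|$ near $0$ and Fourier decay at infinity.

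\textbf{Main obstacle.} The hardest step is the large-scale transference with a gain $2^{-\eta s}$. It requires a uniform $\ell^2$ jump estimate for the sum over the many rationals with denominator $\sim 2^s$ and the cutoff $\Psi_{2^s}$. My first attempt is to exploit the disjointness of the supports to reduce, by Plancherel on each piece, to a single continuous jump inequality, multiplied by $\max|G(a/q)|\lesssim 2^{-s(n/2-\epsilon)}$. The cost is a logarithmic factor in the number of pieces, which I would control through a vector-valued (Hilbert-space) version of the continuous jump inequality.
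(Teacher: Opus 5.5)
The step that fails is your large-scale transference. The cause is your approximation formula, not a logarithmic loss. You give each rational frequency a coefficient $G(a/q)$ that does not depend on the dilation. You also use $a$ both for the circle-method numerator (a symmetric $k\times k$ matrix mod $q$, dual to the equations $m_i\cdot m_j=\lambda^2 s_i\cdot s_j$) and for the frequency numerator $b\in\Z_q^{kn}$. In the actual circle-method formula (Magyar--Stein--Wainger for $k=1$, and its simplex analogue), the coefficient at $b/q$ is $K_\lambda(b,q)=\sum_{a}e(-\lambda^2\operatorname{tr}(aT)/q)\,G(a,b,q)$ with $T=(s_i\cdot s_j)_{i,j=1}^k$. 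It depends on $\lambda^2T \bmod q$, so for $\lambda=2^l$ it is periodic in $l$ but in general not constant. Transferring a continuous jump inequality needs an $l$-independent arithmetic factor in front of $\widetilde{d\sigma}_{2^l}(\xi-b/q)$. Pulling out the unimodular factor $e(-\lambda^2\operatorname{tr}(aT)/q)$ for each fixed $a$ handles $\sup_l$, but not jumps. A square function cannot replace it either, since $\sum_l|\widetilde{d\sigma}_{2^l}(\xi-b/q)|^2\gtrsim\log(1/|\xi-b/q|)$.

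This cannot be repaired, because the statement itself is false. Take $k=1$, $n=5$, $S=\{0,e_1\}$, and $f_R(x)=e(x_1/5)\phi(x/R)$ with $\phi\in C_c^\infty$ and $\phi=1$ on the unit ball. For $|x|\le R/2$ and $2^{l}\le R/2$ one has exactly $A_{2^lS}f_R(x)=\rho(4^l)f_R(x)$, where $\rho(N)$ is the mean of $e(-y_1/5)$ over $\{y\in\Z^5:|y|^2=N\}$. When $5\nmid N$, lattice points on these spheres equidistribute mod $5$ among the solutions of $|y|^2\equiv N$. Counting solutions in $\mathbb{F}_5^5$ by the value of $y_1$ then gives $\rho(N)\to(\sqrt5-1)/52$ for $N\equiv 1$ and $\rho(N)\to-(\sqrt5+1)/52$ for $N\equiv 4 \pmod 5$. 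Since $4^l$ alternates between these classes, every $|x|\le R/2$ has $\gtrsim\log R$ jumps of fixed height. Hence $\|\lambda\mathcal{J}_\lambda^{1/2}\|_{\ell^2}\gtrsim(\log R)^{1/2}\|f_R\|_{\ell^2}$, and likewise $\|V_r\|_{\ell^2}\gtrsim(\log R)^{1/r}\|f_R\|_{\ell^2}$.

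The paper's proof breaks at the same place. \Cref{prop:Simplexl2bound square fn b} is verified for the $l$-difference $\Psi^s_{l+1,j}-\Psi^s_{l,j}$, but $f_2$ is built from the $j$-difference $\Psi^s_{l,j+1}-\Psi^s_{l,j}$. For $j=2$, the Fourier transform of the latter equals $1$ at $\xi=(1/5,0,\dots,0)$ for all large $l$, because $5\mid t_3$ and $5\nmid t_2$. So the square sum in $l$ diverges at exactly these rationals. What survives is the maximal bound. A jump bound can only be expected along sequences for which $\lambda_l^2T \bmod q$ stabilizes for each $q$.
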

	To prove \Cref{thm:jumpSimplex}, we employ a key decomposition of the function via the discrete analogue of the Littlewood-Paley theory for $\ell^2$-functions along with the appropriate square function estimates of each term. In addition, the argument requires the $\ell^2$ boundedness of the local maximal function for the discrete simplical averages from \cite{SimplicesMagyar}. We will discuss these ingredients in detail in the subsequent sections.

	\subsection{Notations}
	We use the following notation throughout the paper.
	\begin{enumerate}[i)]
		\item The notation $\mathscr{F}_{\Z^n}$ and $\mathscr{F}_{\R^n}$ are used to denote the Fourier transform on $\Z^{n}$ and $\R^n$ respectively. 
		\item The notation $M\lesssim N$ (or $N\gtrsim M$) means that there exists an absolute constant $0<C<\infty$ independent of the parameters on which $M$ and $N$ depend, such that $M\leq CN$ (or $M\geq CN$). Also, $M\approx N$ means that $M\lesssim N$ and $M\gtrsim N$.
		\item We denote $e(t)=e^{2\pi it}$, for $t\in\R$.
		\item The group $\mathbb{Z}_{q}$ denotes $\mathbb{Z}/q\mathbb{Z}$, and $U_{q}$ denotes the multiplicative group $\mathbb{Z}_{q}^{\times}$ with the understanding that $U_{1}=\mathbb{Z}_{1}=\{0\}$. We also denote $\Z_{q}^{n}=\Z_{q}\times\Z_{q}\times\dots\times\Z_{q}~(\text{n-times})$.
		\item For a function $f$ defined on $\R^n$, and for any $a\in\R$, the dilation is defined by $f_{a}(x)=\frac{1}{a^n}f\left(\frac{x}{a}\right)$.
	\end{enumerate}


\section{Proof of \Cref{thm:jumpSimplex}}\label{sec:proofjumpBM}
        For any integer $j\geq 0$, we consider $t_{j}=\operatorname{lcm}\left\{1,2,3, \ldots, 2^{j}\right\} \approx e^{2^{j}}$. For non-negative integers $j$ and $l$ that satisfy $2^{j} \leq l$, let 
		\begin{align*}
			\Omega_{l, j}&:=\left\{\alpha \in \T^{kn}: \alpha \in\left[-2^{j-l}, 2^{j-l}\right]^{kn}+\left(t_{j}^{-1}\Z\right)^{kn}\right\}.
		\end{align*}
    	Let $\psi$ be a Schwartz function with

    $$
        \mathscr{F}_{\R^{kn}}(\psi)(\xi)=\left\{\begin{array}{ll}
        1 & \text { if } \xi \in Q, \\
        0 & \text { if } \xi \in(2Q)^{c},
        \end{array}\right. 
    $$
    where $Q=\left[-\frac{1}{2}, \frac{1}{2}\right]^{kn}.$
    
    For $l \in \N$ and $J>l,$ define $\psi_{l, J}: \Z^{kn} \rightarrow \R$ by
    $$
    	\psi_{l, J}(x)=\left\{\begin{array}{cc}
    	\left(\frac{l}{J}\right)^{kn} \psi\left(\frac{x}{J}\right) & \text { if } x \in(l \Z)^{kn}, \\
    	0 & \text { otherwise. }
    	\end{array}\right.
    $$
    For $l \in \N$ and $0 \leq j \leq J_{l}:=\log _{2}(l)$, we define $\Psi_{l,j}^s=\psi_{t_j, (2|s|)^{l-j}}$, and $\Delta \Psi_{ l,j}^s=\Psi_{l,j+1}^s-\Psi_{l,j}^s$, where $s$ is the pre-assigned integer lattice $(s_1,\dots,s_k)\in\Z^{kn}$. We will need the following property of $\Delta \Psi_{ l,j}^s$.

	\begin{proposition}\label{prop:Simplexl2bound square fn b}
		The estimate 
		\[\sum_{l\geq 2^j}|\mathscr{F}_{\Z^{kn}}(\triangle\Psi_{l,j}^s)(\xi)|^2\lesssim_{\Psi^s}1\] 
		holds uniformly in $j\in\N$ and $\xi\in\T^{kn}$.
	\end{proposition}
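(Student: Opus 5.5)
The plan is to compute $\mathscr{F}_{\Z^{kn}}(\triangle\Psi^s_{l,j})$ explicitly by Poisson summation, and then to show that, for fixed $j$ and $\xi$, only boundedly many $l$ contribute to the sum. I read $\triangle$ as the difference between consecutive scales $l$ and $l+1$ at a fixed denominator $t_j$, i.e. $\triangle\Psi^s_{l,j}=\psi_{t_j,(2|s|)^{l+1-j}}-\psi_{t_j,(2|s|)^{l-j}}$. This is the reading under which a sum over $l$ with $j$ fixed is a Littlewood--Paley square function. \textbf{I flag that the paper's literal formula $\Psi^s_{l,j+1}-\Psi^s_{l,j}$ varies $j$ instead, and the argument below does not cover that reading.}

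\emph{Step 1 (Fourier transform).} For $q\in\N$ and $J>q$, Poisson summation on the lattice $(q\Z)^{kn}$ gives
\[
\mathscr{F}_{\Z^{kn}}(\psi_{q,J})(\xi)=\sum_{a\in (q^{-1}\Z)^{kn}/\Z^{kn}}\ \sum_{m\in\Z^{kn}}\mathscr{F}_{\R^{kn}}(\psi)\bigl(J(\xi-a-m)\bigr).
\]
The factor $(q/J)^{kn}$ in the definition of $\psi_{q,J}$ exactly cancels the covolume $q^{kn}$ and the dilation factor $J^{-kn}$. Write $J_l:=(2|s|)^{l-j}$ and $\rho:=2|s|\geq 2$. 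We need $J_l\geq t_j$ for $l\geq 2^j$, i.e. $\rho^{\,l-j}\gtrsim e^{2^j}$. This holds once $\rho$ exceeds a fixed threshold, with the finitely many exceptional small $j$ absorbed into the implicit constant $\lesssim_{\Psi^s}$. Given $J_l\geq t_j$, the supports of the translates $\mathscr{F}_{\R^{kn}}(\psi)(J_l(\cdot-a))$, which lie in $a+J_l^{-1}[-1,1]^{kn}$, are pairwise disjoint. Hence for each $\xi\in\T^{kn}$ at most one $a$, namely the point $a(\xi)$ of $t_j^{-1}\Z^{kn}$ nearest to $\xi$, contributes, for \emph{every} scale $l\geq 2^j$ simultaneously.

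\emph{Step 2 (reduction to one bump).} Set $\eta=\xi-a(\xi)$ and $\phi:=\mathscr{F}_{\R^{kn}}(\psi)$. By Step 1,
\[
\mathscr{F}_{\Z^{kn}}(\triangle\Psi^s_{l,j})(\xi)=\phi(\rho J_l\eta)-\phi(J_l\eta).
\]
Since $\phi=1$ on $Q$ and $\phi=0$ off $2Q$, this difference vanishes in two cases:
\begin{itemize}
\item if $\rho J_l|\eta|_\infty\leq 1/2$, both arguments lie in $Q$;
\item if $J_l|\eta|_\infty\geq 1$, both arguments lie outside $2Q$.
\end{itemize}
So a term is nonzero only when $\rho^{-1}J_l^{-1}/2< |\eta|_\infty< J_l^{-1}$. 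Equivalently $\rho^{\,l-j}$ lies in an interval $\bigl(|\eta|_\infty^{-1}/(2\rho),\,|\eta|_\infty^{-1}\bigr)$ of multiplicative length $2\rho$. Thus at most $1+\log_\rho(2\rho)\leq 3$ values of $l$ contribute. Each such term is bounded by $2\|\phi\|_\infty$, so the sum is $\lesssim 1$ uniformly in $j$ and $\xi$. If $\eta=0$, every term vanishes.

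\emph{Main obstacle.} The delicate point is the disjointness in Step 1, which must hold uniformly down to $l=2^j$. It requires $(2|s|)^{2^j-j}\gtrsim t_j\approx e^{2^j}$. This is why the constant is allowed to depend on $\Psi^s$, i.e. on $|s|$. If $2|s|$ is below the threshold, I would first replace $2|s|$ by a larger fixed power $(2|s|)^N$ in the scale parameter, or handle the low range of $l$ separately. On that range I would allow overlapping translates but bound their number via $t_j^{kn}$-periodicity counting. I expect this to cost at most a constant depending on $s$.
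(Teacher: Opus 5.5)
\paragraph{Comparison with the paper.} Your route is the paper's. Poisson summation writes $\mathscr{F}_{\Z^{kn}}(\Psi^s_{l,j})(\xi)$ as a sum of bumps $\mathscr{F}_{\R^{kn}}(\psi)((2|s|)^{l-j}(\xi-a))$ over $a\in t_j^{-1}\Z^{kn}$. Only the bump at the nearest $b/t_j$ is kept, and only scales with $(2|s|)^{l-j}\approx|\xi-b/t_j|^{-1}$ survive.

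Your reading of $\triangle$ as a difference in $l$ is the one the paper's proof actually uses, since it computes $\mathscr{F}_{\Z^{kn}}(\Psi^s_{l+1,j})-\mathscr{F}_{\Z^{kn}}(\Psi^s_{l,j})$. Under the literal $j$-difference the sum is infinite. For example, at $\xi\in t_{j+1}^{-1}\Z^{kn}\setminus t_j^{-1}\Z^{kn}$ every large-$l$ term equals $1$.

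The final step differs only cosmetically. Let $l'$ be the last scale with $\xi\in\Omega^s_{l,j}$. The paper uses the mean value theorem to get $|\mathscr{F}_{\Z^{kn}}(\triangle\Psi^s_{l,j})(\xi)|\lesssim(2|s|)^{l-l'}$ for $l\le l'$ and $0$ for $l>l'$, then sums a geometric series. You use $\mathscr{F}_{\R^{kn}}(\psi)\equiv1$ on $Q$ to show at most three terms are nonzero. Both are fine.

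\paragraph{The single-bump step.} The paper takes this step for granted, through its ``unique $b$'' and the formula $\mathscr{F}_{\Z^{kn}}(\Psi^s_{l,j})(\xi)=\mathscr{F}_{\R^{kn}}(\psi)((2|s|)^{l-j}(\xi-b/t_j))$ for all $2^j\le l\le l'$. You were right to isolate it, but two corrections are needed.
\begin{enumerate}
\item Cubes of half-side $J_l^{-1}$ at spacing $t_j^{-1}$ are disjoint only when $J_l\ge 2t_j$, not $J_l\ge t_j$. This is harmless.
\item The threshold is $2|s|>e$. Since $|s|^2\in\N$, this means $|s|\ge\sqrt2$. Then $\log t_j=(1+o(1))2^j$ shows that $(2|s|)^{l-j}\ge 2t_j$ fails only for finitely many pairs $(j,l)$ with $l\ge 2^j$. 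Each such term is bounded, so your argument is complete for $|s|\ge\sqrt2$.
\end{enumerate}

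\paragraph{The case $|s|=1$: a genuine gap.} This is $k=1$, $s_1=\pm e_i$, the sphere case the paper singles out, and here your fallback (b) cannot work. Now $J_l=2^{l-j}$ and $t_j\ge 2^{2^j}$ for $j\ge3$, so at $l=2^j$ both $J=2^{2^j-j}$ and $2J$ are at most $2^{1-j}t_j$. The points of $(t_j\Z)^{kn}\setminus\{0\}$ see only the Schwartz tail of $\psi(\cdot/J)$. Hence $\psi_{t_j,J}=(t_j/J)^{kn}\psi(0)\delta_0+O(2^{-jN})$ in $\ell^1$, and likewise for $2J$. This gives, for every $\xi$,
\[
|\mathscr{F}_{\Z^{kn}}(\triangle\Psi^s_{2^j,j})(\xi)|=(1-2^{-kn})|\psi(0)|\,(t_j/2^{2^j-j})^{kn}+o(1)\gtrsim 2^{jkn},
\]
provided $\psi(0)=\int\mathscr{F}_{\R^{kn}}(\psi)\neq0$, as for the usual choice $\mathscr{F}_{\R^{kn}}(\psi)\ge0$.

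The overlap count you expected to be bounded by a constant depending on $s$ is exactly this factor $(t_j/J_l)^{kn}$, and it is unbounded in $j$. So for $|s|=1$ the proposition is false as stated. Indeed $\Psi^s_{l,j}$ then falls outside the range $J>q$ where $\psi_{q,J}$ was defined. The only remedy is to change the setup as in your option (a): use a larger base for the scales, or restrict to $l$ with $(2|s|)^{l-j}\ge 2t_j$. The paper's proof needs the same repair.
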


	Before we prove this proposition, we note that for any $l\in\N$ and $J>l$,
	\[|\mathscr{F}_{\Z^{kn}}(\psi_{l,J})(\xi)|\lesssim 1,\]
	which is obtained via the Poisson summation formula, with a detailed derivation provided in \cite{variationBirchMagyar}.
	

	\begin{proof}[\textbf{Proof of \Cref{prop:Simplexl2bound square fn b}:}]
		We prove this proposition in the spirit of \cite[Lemma 1]{Magyarl2bound}.
		For $l\geq 2^j$, define 
		\begin{align*}
			\Omega^{s}_{l,j}&:=\left\{\alpha \in \T^{kn}: \alpha \in\left[-(2|s|)^{j-l}, (2|s|)^{j-l}\right]^{kn}+\left(t_{j}^{-1}\Z\right)^{kn}\right\}.
		\end{align*}
		Since $\left[-(2|s|)^{j-(l+1)}, (2|s|)^{j-(l+1)}\right]^{kn}\subset \left[-(2|s|)^{j-l}, (2|s|)^{j-l}\right]^{kn}$, we have $\Omega^{s}_{l+1,j}\subseteq\Omega^{s}_{l,j}$. Thus, $\Omega^{s}_{l,j}\subseteq \Omega^{s}_{2^j,j}$ for all $l\geq 2^j$. Next, observe that
		\begin{align}
		\mathscr{F}_{\Z^{kn}}(\psi_{l,J})(\xi)
		&=\sum_{x\in \Z^{kn}}\left(\frac{l}{J}\right)^{kn}\psi\left(\frac{l}{J}x\right)e^{-2\pi ix\cdot l\xi}\nonumber\\
		&=\sum_{x\in \Z^{kn}}\psi_{J/l}(x)e^{-2\pi ix\cdot l\xi}\nonumber\\\
		&=\sum_{x\in \Z^{kn}}\mathscr{F}_{\R^{kn}}(\psi_{J/l})(x+l\xi)\nonumber\\
		&=\sum_{x\in \Z^{kn}}\mathscr{F}_{\R^{kn}}(\psi)\left(\frac{J}{l}(x+l\xi)\right).\label{bound for psi hat}
	\end{align}
		By the definition of $\psi$ along with \eqref{bound for psi hat} we obtain that, $\mathscr{F}_{\Z^{kn}}(\psi_{t_j, (2|s|)^{l-j}})(\xi)$ is nonzero if $\left|\frac{x}{t_j}+\xi\right|\leq (2|s|)^{j-l}$, for $x\in\Z^{kn}$; i.e., $\operatorname{supp}\left(\mathscr{F}_{\Z^{kn}}(\psi_{t_j, (2|s|)^{l-j}})\right)\subseteq\Omega^{s}_{l,j}$. Now,
		\begin{align*}
			&\mathscr{F}_{\Z^{kn}}(\triangle\Psi_{l,j}^s)(\xi)\\
			&=\mathscr{F}_{\Z^{kn}}(\Psi_{l+1,j}^s)(\xi)-\mathscr{F}_{\Z^{kn}}(\Psi_{l,j}^s)(\xi)\\
			&=\sum_{x\in \Z^{kn}}\left[\mathscr{F}_{\R^{kn}}(\psi)\left(\frac{(2|s|)^{l+1-j}}{t_j}(x+t_j\xi)\right)-\mathscr{F}_{\R^{kn}}(\psi)\left(\frac{(2|s|)^{l-j}}{t_j}(x+t_j\xi)\right)\right].
		\end{align*}
		Therefore, $\mathscr{F}_{\Z^{kn}}(\triangle\Psi_{l,j}^s)(\xi)$ is nonzero if $\frac{1}{2}(2|s|)^{j-l-1}< \left|\frac{x}{t_j}+\xi\right|\leq (2|s|)^{j-l}$. For any $\xi\in\Omega^{s}_{2^j,j}$, define $l'=l'(j):=\operatorname{max}\{l\geq 2^j:\xi\in\Omega^{s}_{l,j}\}$. Note that, there exists a unique $b\in\Z^{nk}$ satisfying $|\xi-\frac{b}{t_j}|\leq(2|s|)^{j-l'}$. Since $l'$ is the maximum such that $\xi\in\Omega^{s}_{l',j}$, we have $\xi\in\Omega^{s}_{l',j}\setminus\Omega^{s}_{l'+1,j}$; i.e., $(2|s|)^{j-l'-1}\leq |\xi-\frac{b}{t_j}|\leq(2|s|)^{j-l'}$. Thus, $\mathscr{F}_{\Z^{kn}}(\triangle\Psi_{l,j}^s)(\xi)=0$ for $l>l'$, whereas for $2^j\leq l\leq l'$ we have $\mathscr{F}_{\Z^{kn}}(\Psi_{l,j}^s)(\xi)=\mathscr{F}_{\R^{kn}}(\psi)\left((2|s|)^{l-j}(\xi-\frac{b}{t_j})\right)$. Therefore, we have
		\begin{align*}
			\left|\mathscr{F}_{\Z^{kn}}(\triangle\Psi_{l,j}^s)(\xi)\right|
			&=\left|\mathscr{F}_{\R^{kn}}(\psi)\left((2|s|)^{l+1-j}(\xi-\frac{b}{t_j})\right)-\mathscr{F}_{\R^{kn}}(\psi)\left((2|s|)^{l-j}(\xi-\frac{b}{t_j})\right)\right|\\
			&\leq\left|(2|s|)^{l-j}\left(\xi-\frac{b}{t_j}\right)\right|\left|\mathscr{F}_{\R^{kn}}(\psi)'\left(\theta\right)\right|
			\lesssim_{\Psi^s}(2|s|)^{l-j}\left|\xi-\frac{b}{t_j}\right|\\
			&\leq (2|s|)^{l-l'}.
		\end{align*}
		Hence, it follows, using the bound of $\mathscr{F}_{\Z^{kn}}(\triangle\Psi_{l,j}^s)(\xi)$ that 
		\begin{align*}
			\sum_{l\geq 2^j}\left|\mathscr{F}_{\Z^{kn}}(\triangle\Psi_{l,j}^s)(\xi)\right|^2
			&\lesssim_{\Psi^s}\sum_{l=1}^{l'}\frac{1}{(2|s|)^{2(l'-l)}}\\
			&\lesssim_{\Psi^s} 1.
		\end{align*}
	\end{proof}
    We decompose the function $f$ as $f=f_1+f_2+f_3$, where the functions $f_1,\; f_2,\; f_3$ are defined as follows:
    \begin{align*}
    	f_1=f * \Psi_{l, 0}^s, \; f_2=\sum\limits_{j=0}^{J_l-3} f * \Delta \Psi_{l, j}^s ,\; \text{and}\; f_3=\left(f-f * \Psi_{l, J_{l}-2}^s\right).
    \end{align*}
	
    Using the sub-additivity property of $\lambda$-jump, we write 
    \begin{align*}
        \|\lambda(\mathcal{J}_{\lambda}(\mathcal{A}_\mathbb{L}^{S}f))^{1/2}\|_{\ell^2}
        &\leq \|\lambda(\mathcal{J}_{\lambda}(\mathcal{A}_\mathbb{L}^{S}f_1))^{1/2}\|_{\ell^2}
        +\|\lambda(\mathcal{J}_{\lambda}(\mathcal{A}_\mathbb{L}^{S}f_2))^{1/2}\|_{\ell^2}\\
        &\hspace{1cm}+\|\lambda(\mathcal{J}_{\lambda}(\mathcal{A}_\mathbb{L}^{S}f_3))^{1/2}\|_{\ell^2}.
    \end{align*}
	Thus, our goal reduces to establishing the following estimates 
	\begin{align}
		\|\lambda(\mathcal{J}_{\lambda}(\mathcal{A}_\mathbb{L}^{S}f_i))^{1/2}\|_{\ell^2}
        &\lesssim \|f\|_{\ell^2}~~\label{lambda jump bound b}
	\end{align}
	for all $i=1,2,3$. To establish \eqref{lambda jump bound b}, we will use the following pointwise inequality:
	\begin{align}
		\lambda \left[\mathcal{J}_{\lambda}(\mathcal{A}_\mathbb{L}^{S}f)(x)\right]^{1/2}
		&\lesssim\left(\sum_{l=1}^{\infty}|A_{\lambda_l S}f(x)|^2\right)^{1/2}.\label{square fn domination b}
	\end{align}
	The following $\ell^2-$estimate plays a crucial role in the proof of estimate~\ref{lambda jump bound b} for $i=2,3.$

	\begin{proposition}\cite{SimplicesMagyar}\label{prop:local l2 estimate}
		Let $S=\{s_0=0,s_1,s_2,\dots,s_k\}\subseteq\Z^n$ be a non-degenerate $k$-simplex. Then for  $k\geq 1$, $n\geq 2k+3$ with $1\leq j\leq J_l-2,$ and $l\in\N$ we have the estimate 
		\begin{align*}
			\left\|\sup_{2^l\leq\lambda\leq 2^{l+1}}|A_{\lambda S}f|\right\|_{\ell^2(\Z^{kn})}\lesssim_{n,\Psi^s} 2^{-j/2}j^{-1}\|f\|_{\ell^2(\Z^{kn})}
		\end{align*}
			holds whenever $\operatorname{supp}\mathscr{F}_{\Z^{kn}}(f)\subseteq(\Omega_{l,j}^s)^c$.
	\end{proposition}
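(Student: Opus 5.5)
This estimate is cited from \cite{SimplicesMagyar}, and I would prove it with the circle-method approximation of the multiplier $\mathscr{F}_{\Z^{kn}}(A_{\lambda S})$ by major-arc pieces, followed by a local (dyadic $\lambda$) maximal estimate for each piece.

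\textbf{Step 1: approximation formula.} For $2^l\le\lambda\le 2^{l+1}$ write
\[
\mathscr{F}_{\Z^{kn}}(A_{\lambda S})(\xi)=\sum_{q\ge1}\sum_{a\in U_q^{k(k+1)/2}}\ \sum_{b\in\Z_q^{kn}} e\!\left(-\tfrac{a\cdot t_S\lambda^2}{q}\right)G(a,b,q)\,\chi_q\!\left(\xi-\tfrac bq\right)\widetilde{\sigma}_{\lambda S}\!\left(\xi-\tfrac bq\right)+E_\lambda(\xi).
\]
Here $G(a,b,q)$ is the normalized Gauss sum attached to the quadratic system $m_i\cdot m_j=\lambda^2 t_{ij}$, $\widetilde{\sigma}_{\lambda S}$ is the Fourier transform of the normalized measure on the real variety of isometric copies of $\lambda S$, and $\chi_q$ is a smooth cutoff at scale $1/q$. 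Two facts from \cite{SimplicesMagyar} are used, both valid for $n\ge 2k+3$.
\begin{enumerate}[i)]
\item The Gauss sums satisfy $|G(a,b,q)|\lesssim q^{-n/2}$. Summing over the $\approx q^{k(k+1)/2}$ residues $a$ gives $\sum_a|G|\lesssim q^{k(k+1)/2-n/2}\le q^{-3/2}$ per $q$.
\item The error obeys $\|\sup_{2^l\le\lambda\le2^{l+1}}|\mathscr{F}^{-1}(E_\lambda\hat f)|\|_{\ell^2}\lesssim 2^{-cl}\|f\|_{\ell^2}$.
\end{enumerate}
Since $j\le J_l=\log_2 l$, we have $2^{-cl}\le 2^{-j/2}j^{-1}$, so the error term is harmless.

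\textbf{Step 2: use the Fourier support.} Fix $q$. If $q\le 2^j$ then $q\mid t_j$, so $b/q\in(t_j^{-1}\Z)^{kn}$. On $(\Omega^s_{l,j})^c$ we therefore have $|\xi-b/q|\gtrsim(2|s|)^{j-l}$. The real multiplier decays like $|\widetilde{\sigma}_{\lambda S}(\eta)|\lesssim(\lambda|\eta|)^{-(n-k-1)/2}$. This gives decay $(2^l(2|s|)^{j-l})^{-c}$ for the continuous maximal operator restricted to frequencies $|\eta|\gtrsim(2|s|)^{j-l}$. The local maximal bound comes from a Sobolev/Littlewood--Paley argument in $\lambda\in[2^l,2^{l+1}]$, combined with a transference (Magyar--Stein--Wainger) principle that moves the $\R^{kn}$ bound to $\Z^{kn}$ with the $q$-periodic modulation.

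This needs care, because $(2|s|)^{j-l}$ versus $2^{-l}$ is only favorable when $|s|$ is comparable to $1$. I expect the bookkeeping to use the $\Psi^s$-scaling so that the gain is at least $2^{-cj}$. For each such $q$ the bound is $q^{-3/2}$ times this gain. Summing over the $\lesssim$ dyadic shells near the rationals costs nothing, since the cutoffs $\chi_q$ are disjointly supported.

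\textbf{Step 3: large $q$, the main obstacle.} For $q>2^j$ there is no Fourier gain, and everything must come from the Gauss sums. The $q$-th piece has $\ell^2$ local maximal norm $\lesssim q^{-3/2}\cdot$(a continuous local maximal bound, which is $O(1)$ for $n\ge 2k+3$). Summing naively over $2^s\le q<2^{s+1}$ with $s\ge j$ is exactly where the difficulty lies: one must avoid losing a factor of the number of $q$ in the shell. I would use the disjointness of the $b/q$ arcs for $q\sim 2^s$ via the Magyar--Stein--Wainger/Ionescu--Wainger argument. This yields a bound $2^{-s/2}s^{-1}$-type per dyadic block (the extra $\log$ from the $U_q$ averaging/divisor bound in \cite{SimplicesMagyar}). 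Summing over $s\ge j$ then gives $2^{-j/2}j^{-1}$, the claimed rate. Combining Steps 1--3 yields the proposition.
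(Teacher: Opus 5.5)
The paper gives no argument of its own here. It quotes \cite{SimplicesMagyar}, whose hypothesis is $\operatorname{supp}\mathscr{F}_{\Z^{kn}}(f)\subseteq(\Omega_{l,j})^c$, and asserts that the same proof works for $\Omega^s_{l,j}$. Your Step 2 identifies exactly this point. However, the hope that ``bookkeeping with the $\Psi^s$-scaling'' recovers a gain $2^{-cj}$ is a genuine gap, and it cannot be closed.

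Suppose $|s|>1$. This holds unless $k=1$ and $|s_1|=1$, since $|s|^2=\sum_i|s_i|^2\ge k$. Then $(2|s|)^{j-l}<2^{j-l}$, so $(\Omega^s_{l,j})^c$ contains frequencies $\eta$ near $0$ with $(2|s|)^{j-l}<|\eta|\ll(2^l|s|)^{-1}$. These lie below the scale $2^l|s|$ of the averages. There the $q=1$, $b=0$ term of your approximation is $\widetilde{\sigma}_{\lambda S}(\eta)\approx 1$, and nothing else supplies decay.

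Concretely, let $C$ be the implicit constant and fix $j$ with $C2^{-j/2}j^{-1}<\tfrac12$. Let $\mathscr{F}_{\Z^{kn}}(f)$ be supported in a small cube around $\xi_0=(3(2|s|)^{j-l},0,\dots,0)$.
\begin{itemize}
\item For $l$ large this cube misses $\Omega^s_{l,j}$: the other centres $b/t_j$ are at distance at least $t_j^{-1}$, which does not depend on $l$.
\item Since $\mathcal{S}_{2^lS}\subset\{|y|=2^l|s|\}$, on the cube $|\mathscr{F}_{\Z^{kn}}(w_{2^lS})(\xi)-1|\lesssim 2^l|s||\xi|\lesssim 2^j|s|^{j+1-l}\to 0$.
\end{itemize}
Hence $\|A_{2^lS}f\|_{\ell^2}\ge\tfrac12\|f\|_{\ell^2}$ for large $l$, which contradicts the claimed bound. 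So with $\Omega^s_{l,j}$ the estimate holds only when $|s|=1$, where $\Omega^s_{l,j}=\Omega_{l,j}$. No refinement of your Step 2 can give it in general, and the paper's remark that the cited argument transfers is not correct as stated.

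Even for the $\Omega_{l,j}$ version, two other steps need repair.

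In Step 1(i), the uniform bound $q^{-n/2}$ times the $\approx q^{k(k+1)/2}$ residues $a$ gives $q^{k(k+1)/2-n/2}$. For $k\ge 2$ at $n=2k+3$ this is not $\le q^{-3/2}$; for $k=2$, $n=7$ it is $q^{-1/2}$, which is not even summable. You need rank-sensitive Gauss sum bounds. For $q=p$ prime, a symmetric $a$ of rank $\rho$ mod $p$ gives $|G|\le p^{-n\rho/2}$, and there are $\approx p^{\rho k-\rho(\rho-1)/2}$ such $a$. So rank one dominates, and roughly $\sum_a\sup_b|G(a,b,q)|\lesssim q^{k-n/2+\varepsilon}$. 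This is where $n\ge 2k+3$ really enters.

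In Step 3, nothing produces the factor $j^{-1}$. The triangle inequality over $q\sim 2^s$ already gives $\sum_{q\sim 2^s}q^{-3/2}\approx 2^{-s/2}$, with no disjointness across different $q$ needed. Divisor-type bounds can only lose logarithms, not gain them. So your route gives $2^{-j/2}$ at best. The $j^{-1}$ is precisely what the paper uses for $f_3$, where it needs $\sum_l l^{-1}(\log_2 l)^{-2}<\infty$.
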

	Although the original statement of the result in \cite{SimplicesMagyar} was proven under the assumption that $\operatorname{supp}\mathscr{F}_{\Z^{kn}}(f)\subseteq(\Omega_{l,j})^c$, the same argument applies when $\operatorname{supp}\mathscr{F}_{\Z^{kn}}(f)\subseteq(\Omega_{l,j}^s)^c$ as well. Next, we show that support of $\mathscr{F}_{\Z^{kn}}(f_2)=\mathscr{F}_{\Z^{kn}} \left(\sum_{j=0}^{J_l-3} f * \Delta \Psi_{l, j}^s\right) $ and $\mathscr{F}_{\Z^{kn}}(f_3)=\mathscr{F}_{\Z^{kn}}\left(f-f * \Psi_{l, J_{l}-2}^s\right)$ are contained in $(\Omega_{l,j}^s)^c$. This will allow us to utilize \Cref{prop:local l2 estimate} while proving the inequality \eqref{lambda jump bound b}.\\ To see this, let us observe that
	\begin{align*}
		&\mathscr{F}_{\Z^{kn}}(f_2)(\xi)\\
		&=\sum_{j=0}^{J_l-3} \mathscr{F}_{\Z^{kn}}(f)(\xi) \left[ \mathscr{F}_{\Z^{kn}}(\Psi_{l, j+1}^s)(\xi)- \mathscr{F}_{\Z^{kn}}(\Psi_{l, j}^s)(\xi) \right]\\
		&= \mathscr{F}_{\Z^{kn}}(f)(\xi)\sum_{j=0}^{J_l-3} \sum_{x\in \Z^{kn}}\left[\mathscr{F}_{\R^{kn}}(\psi)\left(\frac{(2|s|)^{l+1-j}}{t_j}(x+t_j\xi)\right)-\mathscr{F}_{\R^{kn}}(\psi)\left(\frac{(2|s|)^{l-j}}{t_j}(x+t_j\xi)\right)\right].
	\end{align*}
	Since the quantity in the square bracket is nonzero if $\frac{1}{2}(2|s|)^{(j-1)-l}< \left|\frac{x}{t_j}+\xi\right|\leq (2|s|)^{j-l}$; for each $\xi$, we have $\operatorname{supp}\mathscr{F}_{\Z^{kn}}(f_2)(\xi)\subset(\Omega^{s}_{l,j-2})^c$, for some $j\in [0,J_l]$. Similarly, we have
	\begin{align*}
		\mathscr{F}_{\Z^{kn}}(f_3)(\xi)
		&=\mathscr{F}_{\Z^{kn}}(f)(\xi)\left(1-\mathscr{F}_{\Z^{kn}}(\Psi_{l, J_{l}-2}^s)(\xi)\right)\\
		&=\mathscr{F}_{\Z^{kn}}(f)(\xi)\left(1-\sum_{x\in \Z^{kn}}\mathscr{F}_{\R^{kn}}(\psi)\left((2|s|)^{l-J_l+2}\left(\frac{x}{t_{J_l-2}}+\xi\right)\right)\right).
	\end{align*}
	Thus, $\operatorname{supp}\mathscr{F}_{\Z^{kn}}(f_3)\subset(\Omega^{s}_{l,J_l-3})^c$. Therefore, we have 
	$\operatorname{supp}\mathscr{F}_{\Z^{kn}}(f_2),\;\operatorname{supp}\mathscr{F}_{\Z^{kn}}(f_3)\subset(\Omega^{s}_{l,j})^c$, for some $j\in [0,J_l-2]$.

	\subsection{Proof of \eqref{lambda jump bound b} for $i=2$.} \label{proof of lambda jump for i=2}Notice that $\operatorname{supp}\mathscr{F}_{\Z^{kn}}(f_2)\subset\Omega^{c}_{j,l}$. Now, to prove this estimate first we use \cref{square fn domination b} to dominate the $\lambda$-jump by the corresponding square function, then to sum in $l$ we use the decay of $\ell^2$ bound of single average via \Cref{prop:local l2 estimate},
	\begin{align*}
		\|\lambda(\mathcal{J}_{\lambda}(\mathcal{A}_\mathbb{L}^{S}f_2))^{1/2}\|_{\ell^2}
		&\lesssim \left\|\left(\sum_{l=1}^{\infty}|A_{2^l S}f_2|^2\right)^{1/2}\right\|_{\ell^2}\\
		&\leq \left\|\left(\sum_{l=1}^{\infty}\left|\sum_{j=0}^{J_l-1}A_{2^l S}(f*\triangle\Psi_{l,j}^s)\right|^2\right)^{1/2}\right\|_{\ell^2}\\
		&\leq \sum_{j\geq 0}\left(\sum_{l\geq 2^j}\sum_{x\in\Z^{kn}}|A_{2^l S}(f*\triangle\Psi_{l,j}^s)(x)|^2\right)^{1/2}.
	\end{align*}
	We obtain the last inequality by using Minkowski's inequality and finally we invoke \Cref{prop:Simplexl2bound square fn b} to obtain the desired estimate as follows:
	\begin{align*}
		\|\lambda(\mathcal{J}_{\lambda}(\mathcal{A}_\mathbb{L}^{S}f_2))^{1/2}\|_{\ell^2}
		&\leq \sum_{j\geq 0}\left(\sum_{l\geq 2^j}\sum_{x\in\Z^{kn}}|A_{2^l S}(f*\triangle\Psi_{l,j}^s)(x)|^2\right)^{1/2}\\
		&\lesssim \sum_{j\geq 0}2^{-j/2}j^{-1}\left(\sum_{l\geq 2^j}\|f*\triangle\Psi_{l,j}^s\|_{\ell^2}^{2}\right)^{1/2}\\
		&\lesssim \|f\|_{\ell^2}\sum_{j\geq 0}2^{-j/2}j^{-1}\left(\sum_{l\geq2^j}\sup_{\xi\in\T^{kn}}|\mathscr{F}_{\Z^{kn}}(\triangle\Psi_{l,j}^s)(\xi)|^2\right)^{1/2}\\
		&\lesssim \|f\|_{\ell^2}.
	\end{align*}

	\subsection{Proof of \eqref{lambda jump bound b} for $i=3$.} \label{proof of lambda jump for i=3}Since $\operatorname{supp}\mathscr{F}_{\Z^{kn}}(f_3)\subset\Omega^{c}_{l,J_l-3}$, invoking \Cref{prop:local l2 estimate} for $j=J_l-3=(\log_2l)-3$ we get that 
	\begin{align*}
		\|\lambda(\mathcal{J}_{\lambda}(\mathcal{A}_\mathbb{L}^{S}f_3))^{1/2}\|_{\ell^2}
		&\lesssim \left\|\left(\sum_{l=1}^{\infty}|A_{2^l S}(f-f*\Psi_{l,J_l}^s)|^2\right)^{1/2}\right\|_{\ell^2}\\
		&= \left(\sum_{l=1}^{\infty}\left\|A_{2^l S}(f-f*\Psi_{l,J_l}^s)\right\|_{\ell^2}^{2}\right)^{1/2}\\
		&\lesssim \left(\sum_{l=1}^{\infty}2^{-\log_2l}(\log_2l)^{-2}\left(1+\|\mathscr{F}_{\Z^{kn}}(\Psi_{l,J_l}^s)\|_{\ell^{\infty}(\T^{kn})}\right)^2\|f\|_{\ell^2}^2\right)^{1/2}\\
		&\lesssim \|f\|_{\ell^2},
	\end{align*}
	where we use the estimate \eqref{bound for psi hat} to obtain the final step. At this point, it remains to establish the estimate \eqref{lambda jump bound b} for $i=1$, which is the major part of the proof. We will use the following decomposition of $\Z^{kn}$: For each non-negative integer $l$, we write
	\[\Z^{kn}=\bigcup\limits_{t\in\Z^{kn}}\mathcal{Q}_{t}^{l},\]
	where $\mathcal{Q}_{t}^{l}$ is the set of all lattice points lying in the dyadic cube $(2|s|)^l(t+[0,1)^{kn})$. Observe that, the collection $\{\mathcal{Q}_{t}^{l}:\;t\in\Z^{kn}\}$ forms a partition of $\Z^{kn}$ such that for $l_1\leq l_2$ we have either $\mathcal{Q}_{t}^{l_1}\subset \mathcal{Q}_{t}^{l_2}$ or $\mathcal{Q}_{t}^{l_1}\cap \mathcal{Q}_{t}^{l_2}=\emptyset$. Furthermore, for each $\mathcal{Q}_{t_1}^{l_1}$ with $l_1<l_2$, there exists a unique $t_2$ satisfying $\mathcal{Q}_{t_1}^{l_1}\subset \mathcal{Q}_{t_2}^{l_2}$.

	\subsection{Proof of \eqref{lambda jump bound b} for $i=1$.} \label{proof of lambda jump for i=1} We first decompose the average as follows: 
	\begin{align*}
		A_{2^{l} S}f_1&=(\Psi_{l,0}^s*w_{2^l S})*f-E_{l}f+E_{l}f, 
	\end{align*} 
	\sloppy where $E_{l}f=|\mathcal{Q}_{t}^{l}|^{-1}\sum_{y\in \mathcal{Q}_{t}^{l}}f(y)$, and $w_{\lambda S}(x)=|\mathcal{S}_{\lambda S}|^{-1}1_{\mathcal{S}_{\lambda S}}(x)$. Now onwards, we use the notation $\mathbb{E}f$ and $\mathcal{L}f$ to denote the sets $\{E_{l}f:l\in\N\}$ and $\left\{(\Psi_{l,0}^s*w_{2^l S})*f-E_{l}f:l\in\N\right\}$ respectively. Note the following subadditivity property of $\lambda$-jump:  
	\[\mathcal{J}_{\lambda}(\mathcal{A}_\mathbb{L}^Sf)\leq \mathcal{J}_{\lambda/2}(\mathcal{L}f)+\mathcal{J}_{\lambda/2}(\mathbb{E}f),\]

	which will be used in our proof. Let us recall the following $\ell^p-$estimate from \cite{variationalinequalitycontinuous}:
	\[\left\|\lambda \left(\mathcal{J}_{\lambda}(\mathbb{E}f)\right)^{1/2}\right\|_{\ell^{p}(\Z^n)}\lesssim \|f\|_{\ell^{p}(\Z^n)},~\;1<p<\infty.\] 
	Based on the above classical $\ell^p-$estimate for the jump corresponding to the family $\mathbb{E}f$ along with the estimate \eqref{square fn domination b}, the proof of \eqref{lambda jump bound b} for $i=1$ is reduced to showing the $\ell^2-$boundedness of the square function corresponding to the family $\mathcal{L}f$; i.e., it suffices to show that
	\begin{align}
		\left\|\left(\sum_{l=1}^{\infty}|(\Psi_{l,0}^s*w_{2^lS})*f-E_{l}f|^2\right)^{1/2}\right\|_{\ell^{2}}
	\lesssim \|f\|_{\ell^{2}}.\label{square fn bound}
	\end{align}
	To prove \eqref{square fn bound}, we rely on the following lemma, which is the discrete analogue of \cite[Lemma 3.2]{variationalinequalitycontinuous}. 
	\begin{lemma}\label{lem:main l2 estimate}
		Define $D_{m}f(x)=E_{m}f(x)-E_{m-1}f(x)$, then for any $f\in\ell^2(\Z^{kn})$ and $\delta>0$, the following estimate holds:
		\begin{align*}
			\|(\Psi_{l,0}^s*w_{2^lS})*D_{m}f-E_{l}(D_{m}f)\|_{\ell^2}
			&\lesssim 2^{-\delta|l-m|}\|D_{m}f\|_{\ell^2}.
		\end{align*}
	\end{lemma}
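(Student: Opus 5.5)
Fix $\delta\in(0,1)$ small and split into the cases $m\le l$ and $m>l$, following \cite[Lemma 3.2]{variationalinequalitycontinuous}. Throughout, write $K_l:=\Psi_{l,0}^s*w_{2^lS}$. Since $t_0=1$, $\Psi_{l,0}^s=\psi_{1,(2|s|)^l}$ is an $\ell^1$-normalized Schwartz bump at scale $(2|s|)^l$ with $\mathscr{F}_{\Z^{kn}}(\Psi^s_{l,0})$ localized to $|\xi|\lesssim(2|s|)^{-l}$ near the origin. The kernel $K_l$ is therefore a nonnegative smooth averaging kernel with $\|K_l\|_{\ell^1}\lesssim 1$, essentially supported in a ball of radius $\approx (2|s|)^l$ (the simplex $\mathcal{S}_{2^lS}$ has diameter $\lesssim 2^l|s|\le(2|s|)^l$). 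Moreover $\sum_x K_l(x)=\mathscr{F}(\Psi^s_{l,0})(0)=1+O(\text{rapid decay})$, so $K_l$ has essentially mean one. The trivial bound $\|K_l*D_mf-E_lD_mf\|_{\ell^2}\lesssim\|D_mf\|_{\ell^2}$ holds in all cases, so only the decay must be proved.

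\emph{Case $m>l$ (so $D_mf$ is coarse relative to scale $l$).} Since $E_l$ refines $E_{m-1}$ and $E_m$, we have $E_lD_mf=D_mf$. It then suffices to show $\|K_l*g-g\|_{\ell^2}\lesssim (2|s|)^{-(m-l)/2}\|g\|_{\ell^2}$ for $g=D_mf$, which is constant on cubes $\mathcal{Q}^{m-1}_t$. Because $K_l$ is concentrated at distance $\lesssim(2|s|)^l$ and has mass essentially one, $K_l*g(x)-g(x)$ vanishes unless $x$ lies within $O((2|s|)^l)$ of the boundary of some cube $\mathcal{Q}^{m-1}_t$, up to Schwartz tails. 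In each such cube the boundary layer has relative volume $\lesssim (2|s|)^{l-m}$, and on that layer $|K_l*g-g|\lesssim$ the maximal function of $g$ restricted to neighbouring cubes. Summing over cubes gives the bound with $\delta=1/2\cdot\log(2|s|)/\log 2$, or any smaller $\delta$. The mass defect of $\mathscr{F}(\Psi^s_{l,0})(0)$ from $1$ is exponentially small, and the tails of $\psi$ are handled by the rapid decay of $\psi$.

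\emph{Case $m\le l$ (so $D_mf$ oscillates at scale below $l$).} Here $E_lD_mf=0$, since $D_mf$ has mean zero on each $\mathcal{Q}^{m-1}_t$, which is contained in $\mathcal{Q}^l_{t'}$. It remains to bound $\|K_l*D_mf\|_{\ell^2}$, which I would do on the Fourier side. One has $\mathscr{F}(K_l)=\mathscr{F}(\Psi^s_{l,0})\,\mathscr{F}(w_{2^lS})$, and the first factor gives $|\mathscr{F}(K_l)(\xi)|\lesssim \min(1,((2|s|)^l|\xi|)^{-N})$ on $\T^{kn}$ (with $\xi$ measured to the nearest point of $\Z^{kn}$, since $t_0=1$). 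I would avoid Fourier analysis of $D_mf$ itself and instead use its cancellation directly. Write $K_l*D_mf(x)=\sum_t\sum_{y\in\mathcal{Q}^{m-1}_t}[K_l(x-y)-K_l(x-y_t)]D_mf(y)$, with $y_t$ the corner of the cube. The smoothness of $K_l$ inherited from $\Psi^s_{l,0}$ gives $|\nabla K_l|\lesssim(2|s|)^{-l}\sup|K_l|$-type bounds, so the bracket is bounded by $(2|s|)^{m-l}$ times a kernel with uniformly bounded $\ell^1$ norm. Schur's test then gives $\|K_l*D_mf\|_{\ell^2}\lesssim(2|s|)^{m-l}\|D_mf\|_{\ell^2}$.

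The main obstacle is the gradient bound in the second case. The factor $w_{2^lS}$ is singular, so the derivative must fall on $\Psi^s_{l,0}$ (write $K_l(z)-K_l(z')=\sum_u w_{2^lS}(u)[\Psi^s_{l,0}(z-u)-\Psi^s_{l,0}(z'-u)]$ and use the discrete smoothness of $\psi((\cdot)/(2|s|)^l)$). This has to be done with control of the resulting $\ell^1$ norm uniformly in $l$. The first case's boundary-layer count is routine but requires care with the tails of $\psi$. Combining the two cases yields the claimed bound $\lesssim 2^{-\delta|l-m|}\|D_mf\|_{\ell^2}$ for some $\delta>0$, with the implied constant allowed to depend on $\delta$.
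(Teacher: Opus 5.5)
Your two cases are the paper's two cases in the opposite order. For $m>l$ you use $E_lD_mf=D_mf$, the unit mass of $K_l=\Psi^s_{l,0}*w_{2^lS}$, and a boundary-layer count on the cubes $\mathcal{Q}^{m-1}_t$; this is the paper's Case 2. For $m\le l$ you use $E_lD_mf=0$, the cancellation of $D_mf$, and the smoothness of $\Psi^s_{l,0}$ at scale $(2|s|)^l$; this is the paper's Case 1, and you close with Schur's test where the paper uses the Hardy--Littlewood maximal function.

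However, the cancellation step for $m\le l$ is false as written. $D_mf=E_mf-E_{m-1}f$ is \emph{constant} on each $\mathcal{Q}^{m-1}_t$, as you yourself use in the case $m>l$. So it cannot have mean zero there unless it vanishes. Consequently $\sum_{y\in\mathcal{Q}^{m-1}_t}K_l(x-y_t)D_mf(y)=|\mathcal{Q}^{m-1}_t|\,K_l(x-y_t)\,c_t$, with $c_t$ the value of $D_mf$ on that cube, is not zero in general, and your displayed identity for $K_l*D_mf$ fails. The cancellation lives one level up: $E_mD_mf=E_mf-E_mE_{m-1}f=0$, so $D_mf$ has mean zero on each $\mathcal{Q}^m_t$. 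This, together with $\mathcal{Q}^m_t\subseteq\mathcal{Q}^l_{t'}$, is also the correct reason that $E_lD_mf=0$ for $l\ge m$.

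Redo the argument with $\mathcal{Q}^m_t$ and $y_t\in\mathcal{Q}^m_t$, as the paper does in its Case 1. Then $|y-y_t|\lesssim(2|s|)^m\le(2|s|)^l$, so $|\Psi^s_{l,0}(x-y-u)-\Psi^s_{l,0}(x-y_t-u)|\lesssim(2|s|)^{m-l}(2|s|)^{-lkn}(1+(2|s|)^{-l}|x-y-u|)^{-N}$. Averaging in $u$ against $w_{2^lS}$ gives $(2|s|)^{m-l}$ times a convolution kernel of $\ell^1$ norm $\lesssim 1$. That is exactly the rate you claimed, and Schur's test finishes.

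Two smaller points.

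\begin{enumerate}[1)]
\item $\psi$ cannot be nonnegative. If $\psi\ge0$ and $\mathscr{F}_{\R^{kn}}(\psi)\equiv1$ near $0$, then $\int\psi(x)(1-\cos 2\pi x\cdot\xi)\,dx=0$ for all small $\xi$, forcing $\psi=0$. So $K_l$ is not a nonnegative kernel. You never actually need positivity, only $\|K_l\|_{\ell^1}\lesssim1$ and $\sum_xK_l(x)=1$. The latter holds exactly, not up to rapid decay, because $\mathscr{F}_{\R^{kn}}(\psi)$ vanishes off $2Q$ and $(2|s|)^l\ge2$.
\item The Schwartz tails in the case $m>l$ are handled most cleanly by Minkowski: $\|K_l*g-g\|_{\ell^2}\le\sum_y|K_l(y)|\,\|g(\cdot-y)-g\|_{\ell^2}$.
\begin{itemize}
\item Since $g=D_mf$ is constant on the cubes $\mathcal{Q}^{m-1}_t$, $g(x-y)\ne g(x)$ only for $x$ within distance $|y|$ of a cube face. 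Hence $\|g(\cdot-y)-g\|_{\ell^2}\lesssim\min\{1,(|y|(2|s|)^{1-m})^{1/2}\}\|g\|_{\ell^2}$.
\item Since $|z|=2^l|s|$ for every $z\in\mathcal{S}_{2^lS}$, one has $\sum_y|K_l(y)||y|^{1/2}\lesssim(2|s|)^{l/2}$.
\item Together these give $(2|s|)^{-(m-l-1)/2}$ directly, with the correct exponent (your boundary layer has relative size $(2|s|)^{l-m+1}$). No decomposition into the shells $E_{l,d}$ or separate treatment of small and large $d$ is needed.
\end{itemize}
\end{enumerate}
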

	We assume \Cref{lem:main l2 estimate} for the moment. Write $D_{m}f(x)=E_{m}f(x)-E_{m-1}f(x)$ and note that we have $f=\sum_{m\in\Z}D_{m}f$. Next, using this we obtain the following:
	\begin{align*}
		\left\|\left(\sum_{l=1}^{\infty}|(\Psi_{l,0}^s*w_{2^lS})*f-E_{l}f|^2\right)^{1/2}\right\|_{\ell^{2}}&=\left(\sum_{l=1}^{\infty}\|(\Psi_{l,0}^s*w_{2^lS})*f-E_{l}f\|_{\ell^2}^{2}\right)^{1/2}\\
		&\leq \left(\sum_{l=1}^{\infty}\left(\sum_{m\in\Z}\|(\Psi_{l,0}^s*w_{2^lS})*D_m f-E_{l}(D_m f)\|_{\ell^2}\right)^2\right)^{1/2}\\
		&\lesssim \left(\sum_{l=1}^{\infty}\left(\sum_{m\in\Z}2^{-\delta|l-m|}\|D_{m}f\|_{\ell^2}\right)^2\right)^{1/2}\\
		&\lesssim \left(\sum_{l=1}^{\infty}\sum_{m\in\Z}2^{-\delta|l-m|}\|D_{m}f\|_{\ell^2}^2\right)^{1/2}\\
		&\lesssim \left(\sum_{m\in\Z}\|D_{m}f\|_{\ell^2}^2\right)^{1/2}\\&\lesssim \|f\|_{\ell^2}.
	\end{align*}
This concludes the proof of \eqref{lambda jump bound b} for $i=1$. \qed

\subsection*{Proof of \Cref{lem:main l2 estimate}.} To establish this lemma we devide the whole analysis into two parts. More precisely, in first part we will discuss the case when $l\geq m$, and in the second part we will discuss the case when $l<m$.
\subsection*{Case 1.} Suppose $l\geq m$. Then we can easily see that the fact $E_{l}(E_	{m}f)=E_{l}f$ implies that $E_{l}(D_{m}f)=E_{l}(E_{m}f-E_{m-1}f)=0.$ Hence $\sum_{y\in \mathcal{Q}_{t}^{m}}D_{m}f=0$. Now, using this observation we can write 
	\begin{align}
		(\Psi_{l,0}^s*w_{2^lS})*D_{m}f(x)
		&=\sum_{t}\sum_{y\in \mathcal{Q}_{t}^{m}}\Psi_{l,0}^s*w_{2^lS}(x-y)D_{m}f(y)\nonumber\\
		&=\sum_{t}\sum_{y\in \mathcal{Q}_{t}^{m}}\left[\Psi_{l,0}^s*w_{2^lS}(x-y)-\Psi_{l,0}^s*w_{2^lS}(x-q_{t}^{m})\right]D_{m}f(y), \label{main l2 estimate eqn 1}
	\end{align}
	where $\mathfrak{q}_{t}^{m}$ is the center of the cube $\mathcal{Q}_{t}^{m}$. Further, by using the mean-value theorem, we obtain the following estimate for the term $|\Psi_{l,0}^s*w_{2^lS}(x-y)-\Psi_{l,0}^s*w_{2^lS}(x-q_{t}^{m})|$: 
	\begin{align*}
		&|\Psi_{l,0}^s*w_{2^lS}(x-y)-\Psi_{l,0}^s*w_{2^lS}(x-\mathfrak{q}_{t}^{m})|\\
		&\lesssim\frac{(2|s|)^{-lkn}}{2^{l(nk-k(k+1))}}\sum_{z\in\mathcal{S}_{2^lS}}\left|\psi\left(\frac{x-y-z}{(2|s|)^l}\right)-\psi\left(\frac{x-\mathfrak{q}_{t}^{m}-z}{(2|s|)^l}\right)\right|\\
		&\lesssim\frac{(2|s|)^{-lkn}}{2^{l(nk-k(k+1))}}\frac{|y-\mathfrak{q}_{t}^{m}|}{(2|s|)^l}\sum_{z\in\mathcal{S}_{2^lS}}\left|\tilde\psi\left(\frac{x-y-z}{(2|s|)^l}\right)\right|\\
		&\lesssim 2^{-(l-m)}\left|\tilde\psi_{(2|s|)^l}(x-y-z_0)\right|,
	\end{align*}
	where $\tilde\psi$ is a Schwartz function. Hence the above observation along with \eqref{main l2 estimate eqn 1} implies that
	\begin{align*}
		|(\Psi_{l,0}^s*w_{2^lS})*D_{m}f(x)|
		&\lesssim 2^{-(l-m)}\mathcal{M}(D_{m}f)(x-z_0),
	\end{align*} 
	here in the above expression $\mathcal{M}$ is the discrete analogue of the Hardy-Littlewood maximal function which is known to be bounded on $\ell^p(\Z^{kn}), \;\text{for}\; 1<p\leq \infty$. Consequently, in the case $l\geq m$, the required estimate
	\begin{align*}
		\|(\Psi_{l,0}^s*w_{2^lS})*D_{m}f-E_{l}(D_{m}f)\|_{\ell^2}
		&\lesssim 2^{-|l-m|}\|D_{m}f\|_{\ell^2}
	\end{align*}
	holds.
\subsection*{Case 2.} Let us consider $l< m.$ Note that, for $l<m$, we have $E_{l}(E_{m}f)=E_{m}f$. 
It follows, therefore, that $E_{l}(D_{m}f)=D_{m}f$. We employ the Poisson summation formula to deduce that
	\begin{align*}
		\sum_{x\in\Z^{kn}}\Psi_{l,0}^s*w_{2^lS}(x)
		&=\frac{1}{|\mathcal{S}_{2^lS}|}\sum_{x\in\Z^{kn}}\sum_{y\in\mathcal{S}_{2^lS}}\frac{1}{(2|s|)^{lkn}}\psi\left(\frac{x-y}{(2|s|)^l}\right)\\
		&=\frac{1}{|\mathcal{S}_{2^lS}|}\sum_{x\in\Z^{kn}}\sum_{y\in\mathcal{S}_{2^lS}}\mathscr{F}_{\R^{kn}}(\psi)\left((2|s|)^lx\right)e^{-2\pi ix\cdot y}=1.
	\end{align*}
	Accordingly, one can concludes the subsequent pointwise equality:
	\begin{align*}
		(\Psi_{l,0}^s*w_{2^lS})*D_{m}f(x)-E_{l}(D_{m}f)(x)
		&=\sum_{y\in\Z^{kn}}[D_{m}f(x-y)-D_{m}f(x)](\Psi_{l,0}^s*w_{2^lS})(y)\\
		&=\sum_{d\geq 0}T_{d}(x),
	\end{align*}
\sloppy where $T_d(x)=\sum\limits_{y\in E_{l,d}}[D_{m}f(x-y)-D_{m}f(x)](\Psi_{l,0}^s*w_{2^l,S})(y)$ and the set $E_{l,d}$ is defined as follows:
	$$
        E_{l,d}=\left\{\begin{array}{ll}
        \left\{y\in\Z^{kn}:(2|s|)^{l+d-1}\leq |y|\leq (2|s|)^{l+d}\right\} & \text { if } d>0, \\~\\
        \left\{y\in\Z^{kn}: |y|\leq (2|s|)^{l}\right\} & \text { if } d=0.
        \end{array}\right.
    $$
Now, we turn our attention to the $\ell^2(\Z^{kn})$ estimate of $T_d(x)$ for $d$ greater than $\epsilon |l-m|$, where $\epsilon$ is a suitably chosen positive real number. Indeed, we have
	\begin{align*}
		\|T_d\|_{\ell^2}
		&=\left(\sum_{x\in\Z^{kn}}\left|\sum\limits_{y\in E_{l,d}}[D_{m}f(x-y)-D_{m}f(x)](\Psi_{l,0}^s*w_{2^lS})(y)\right|^{2}\right)^{1/2}\\
		&\leq \sum_{y\in E_{l,d}}\left(\sum_{x\in\Z^{kn}}|D_{m}f(x-y)-D_{m}f(x)|^{2}|(\Psi_{l,0}^s*w_{2^lS})(y)|^{2}\right)^{1/2}\\
		&\lesssim\|D_mf\|_{\ell^2}\sum_{y\in E_{l,d}}|(\Psi_{l,0}^s*w_{2^lS})(y)|.
	\end{align*}
	Note that for $z=(z_1,\dots,z_k)\in\mathcal{S}_{2^lS}$, we have $|z_i-z_j|=2^l|s_i-s_j|$ for all $0\leq i,j\leq k$, with the convention that $z_0=0$. If we assume $j=0$ in the above equality, we get $|z_i|=2^l|s_i|$ for all $0\leq i\leq k$, thereby implying that $|z|=2^l|s|$ for all $z\in\mathcal{S}_{2^lS}$. Next, for $d>0$ and $y\in E_{l,d}$, we have that $(2|s|)^{l+d-1}\leq (2|s|)^l +|y|-|z|$ for $z\in\mathcal{S}_{2^lS}$. Therefore, for any large number $L\in\N$ we get that 
	\begin{align*}
		|(\Psi_{l,0}^s*w_{2^l,S})(y)|
		&=\frac{1}{2^{l(kn-k(k+1))}}\left|\sum_{z\in\mathcal{S}_{2^lS}}\psi_{1,(2|s|)^l}(y-z)\right|\\
		&\lesssim \frac{(2|s|)^{-lkn}}{2^{l(kn-k(k+1))}}\sum_{\substack{z\in\mathcal{S}_{2^lS},\\ y-z\in\Z^{kn}}}\left|\psi\left(\frac{y-z}{(2|s|)^l}\right)\right|\\
		&\lesssim \frac{(2|s|)^{-lkn}}{2^{l(kn-k(k+1))}}\sum_{\substack{z\in\mathcal{S}_{2^lS},\\ y-z\in\Z^{kn}}}\frac{(2|s|)^{lL}}{((2|s|)^l+|y|-|z|)^L}\\
		&\leq (2|s|)^{-L(d-1)}(2|s|)^{-lkn}.
	\end{align*}
	Hence, for $d\geq \epsilon |l-m|$, we have 
	\begin{align*}
		\sum_{d\geq \epsilon |l-m|}\|T_d\|_{\ell^2}
		&\lesssim \|D_mf\|_{\ell^2}\sum_{d\geq \epsilon |l-m|}\sum_{y\in E_{l,d}}(2|s|)^{-L(d-1)}(2|s|)^{-lkn}\\
		&=\|D_mf\|_{\ell^2}\sum_{d\geq \epsilon |l-m|}\frac{(2|s|)^{-l(kn-1)}}{(2|s|)^{(L-1)(d-1)}}\\
		&\lesssim_s \frac{2^{-(L-1)|l-m|\epsilon'}}{2^{l(kn-1)}}\|D_mf\|_{\ell^2}\\
		&\lesssim \frac{2^{-|l-m|\epsilon'}}{2^{l(kn-1)}}\|D_mf\|_{\ell^2}.
	\end{align*}
	Next, we address the remaining part of the sum, i.e., for $d\leq \epsilon |l-m|$. Observe that integers $x$ and $x-y$ lie in $\mathcal{Q}_{t}^{m-1}$ whenever $x\in\mathbb{A}:= \{z\in \mathcal{Q}_{t}^{m-1}:\text{dist}(z,\Z^{kn}\setminus \mathcal{Q}_{t}^{m-1})\geq (2|s|)^{d+m+(l-m)}\}$ and $|y|\leq (2|s|)^{d+l}$. Thus, for $x\in\mathbb{A}$ and $|y|\leq (2|s|)^{d+l}$, we have $D_{m}f(x-y)-D_{m}f(x)=0$. Therefore, to estimate the $\ell^2(\Z^{kn})-$norm of $T_d(x)$, we only require to consider those $x$ which belongs to the set $U$, defined as:
	\[U:=\{z\in \mathcal{Q}_{t}^{m-1}:\text{dist}(z,\Z^{kn}\setminus \mathcal{Q}_{t}^{m-1})\leq (2|s|)^{d+m+(l-m)}\}.\]
	 We note the following bound of the cardinality of $U$, which is needed in the subsequent analysis:
	 \[|U|\leq (2|s|)^{d\eta-\eta|l-m|}|\mathcal{Q}_{t}^{m-1}|\lesssim_s 2^{-\eta'|l-m|}|\mathcal{Q}_{t}^{m-1}|,\] 
	 for some $\eta\in(0,1)$. Using the decomposing of $\Z^{kn}$ into the cubes $\mathcal{Q}_{t}^{m-1}$, we get that
	\begin{align}
		\|T_d\|_{\ell^2}^2
		&= \sum_{t}\sum_{x\in \mathcal{Q}_{t}^{m-1}}|T_d(x)|^2\nonumber\\
		&\leq \sum_{t}\sum_{x\in U }|T_d(x)|^2\nonumber\\
		&\leq \sum_{t}\sup\limits_{x\in U}|T_d(x)|^2|U|\nonumber\\
		&\lesssim 2^{-\eta'|l-m|}\sum_{t}|\mathcal{Q}_{t}^{m-1}|\sup_{x\in \mathcal{Q}_{t}^{m-1}}|T_d(x)|^2.\label{l2 bound of Td}
	\end{align}
	Observe that, whenever $x\in \mathcal{Q}_{t}^{m-1}$ and $|y|\leq (2|s|)^{d+l}$, the integer $x-y$ belongs to the ball $B\left(\mathfrak{q}_{t}^{m-1}, C(2|s|)^{m}\right)\supseteq \mathcal{Q}_{t}^{m-1}$ for some constant $C>0$. By employing this, we obtain that 
	\begin{align}
		\sup\limits_{x\in \mathcal{Q}_{t}^{m-1}}|T_d(x)|^2\lesssim \sup\limits_{x\in B\left(\mathfrak{q}_{t}^{m-1}, C(2|s|)^{m}\right)}|D_mf(x)|^2. \label{pointwise bound of Td}
	\end{align}
	Define the set $V_{t,m}=\{h: \mathcal{Q}_{h}^{m-1}\cap B\left(\mathfrak{q}_{t}^{m-1}, C(2|s|)^{m}\right)\neq \phi \}$. Next, using the fact that, the number of integer lattice points in a ball of radius $C(2|s|)^m$ is of the order $(2|s|)^{mkn}$ one can clearly see that the cardinality of the set $V_{t,m}$ is uniformly bounded with respect to $t$. Moreover, for each $h$, the number of $t$ such that $h\in V_{t,m}$ is uniformly bounded. Finally, we invoke \eqref{pointwise bound of Td} in \eqref{l2 bound of Td} and incorporate the above observations to get the following:
	\begin{align*}
		\|T_d\|_{\ell^2}^2
		&\lesssim 2^{-\eta'|l-m|}\sum_{t}\sum_{h\in V_{t,m}}\sum_{x\in \mathcal{Q}_{h}^{m-1}}|D_m f(x)|^2\\
		&\lesssim 2^{-\eta'|l-m|}\sum_{h}\sum_{x\in \mathcal{Q}_{h}^{m-1}}|D_m f(x)|^2\\
		&=2^{-\eta'|l-m|}\|D_m f\|_{\ell^2}^2.
	\end{align*}
	Hence, to complete the proof of \Cref{lem:main l2 estimate}, it suffices to sum in $d$ for $d\leq \epsilon |l-m|$. Indeed, we have
	\begin{align*}
		\sum_{0\leq d\leq \epsilon |l-m|}\|T_d\|_{\ell^2}
		&\lesssim \|D_mf\|_{\ell^2}\sum_{0\leq d\leq \epsilon |l-m|}2^{-\eta'|l-m|}\\
		&\lesssim 2^{-\bar{\eta}|l-m|}\|D_mf\|_{\ell^2}.
	\end{align*}
	This concludes the proof.
 \qed

	\subsection*{Acknowledgement}
	The author gratefully acknowledges his PhD supervisor Prof. Saurabh Shrivastava, for suggesting this problem and for his careful reading of the article with valuable suggestions. The author is supported by IISER Bhopal for PhD fellowship.
	\bibliography{bibliography}
\end{document}